\newcommand{\mytitle}{Explicit equations for exterior square of the general linear group}
\title{\mytitle}
\author{Roman~Lubkov}
\thanks{Theorems~\ref{Congr}--\ref{Tutor} are done by the first author and supported by the Russian Science Foundation, grant №17-11-01261.}
\address[Roman~Lubkov]{Department of Mathematics and Mechanics, St.~Petersburg State University}
\email{RomanLubkov@yandex.ru}
\author{Ilia~Nekrasov}
\thanks{Theorems~\ref{Plu}--\ref{again} are done by the second author and supported by the Russian Science Foundation, grant №16-11-10200. Also the second author is thankful to ``Native towns'', a social investment program of PJSC ``Gazprom Neft''.}
\address[Ilia~Nekrasov]{Chebyshev Laboratory, St.~Petersburg State University}
\email{geometr.nekrasov@yandex.ru}
\keywords{General linear group, weight diagrams, exterior square}
\subjclass{20G35}
\date{}
\DeclareMathOperator{\E}{E}
\DeclareMathOperator{\M}{M}
\DeclareMathOperator{\SL}{SL}
\DeclareMathOperator{\GL}{GL}
\DeclareMathOperator{\sign}{sgn}
\DeclareMathOperator{\Gr}{Gr}
\DeclareMathOperator{\Sym}{Sym}
\DeclareMathOperator{\Plu}{Pl\ddot{u}}
\DeclareMathOperator{\Pic}{Pic}
\renewcommand{\trianglelefteq}{\trianglelefteqslant}
\renewcommand{\leq}{\leqslant}
\renewcommand{\geq}{\geqslant}
\newcommand{\bigwedgem}[1]{\mathord{\raisebox{2pt}
{\hbox{$\scriptstyle{\bigwedge^{\!#1}}$}}}}
\newcommand\blank{\mathord{\hbox to 1.5ex{\hrulefill}}\,}
\theoremstyle{plain}
\newtheorem{theorem}{Theorem}
\newtheorem{proposition}{Proposition}
\newtheorem{lemma}{Lemma}
\newtheorem{corollary}{Corollary}
\theoremstyle{remark}
\newtheorem*{remark}{Remark}
\begin{document}

\begin{abstract}
We present several explicit systems of equations defining exterior square of the general linear group $\bigwedgem{2}\GL_{n}$ as an affine group scheme. Algebraic ingredients of the equations, exterior numbers, are translated into the language of weight diagrams corresponding to Lie groups of type $A_{n-1}$ in representation with the highest weight $\varpi_{2}$.
\end{abstract}

\maketitle

\section*{Introduction}\label{intro}

The starting point of the present paper is the following problem: to describe overgroups of elementary subgroups of Chevalley groups. Classically, one of the key steps in a proof of so-called \textit{standard description} uses explicit equations defining a Chevalley groups. In~\cite{LubNekover1} this problem is partially solved for groups of type $A_{n}$ in representation with the highest weight $\varpi_{m}$ ($m \leq n+1$). But the technique of explicit equations is replaced by methods of representation theory.

Using stabilizing of quadratic forms in~\cite{PetOvergr,VP-EOeven,VP-Ep} equations on Chevalley groups were obtained. Also, in~\cite{VavLuzgE6} this technique was developed for stabilizing of the cubic form.

Following the paradigm of the mentioned papers, authors construct several explicit systems of equations defining an affine group scheme $\bigwedgem{2}\GL_{n}$. This case corresponds to a group of type $A_{n-1}$ in the second fundamental representation.

Let us remark that methods of representation theory for a general exterior power use structural results for plethysms $\Sym^k( \bigwedgem{m} \mathbb{C}^n)$ with arbitrary natural numbers $k, m, n$. But in the present paper, we use only one fact: the group $\bigwedgem{2}\GL_{n}$ over a commutative ring $R$ preserves an ideal $\Plu$ generated by the Pl\"ucker relations. The last argument can be interpreted as follows. The algebraic variety $\Gr_{2,n}$ is stabilized under the action of the algebraic group $\bigwedgem{2}\GL_{n}$. Therefore, the present method
\begin{itemize}
\item is more elementary and transparent;
\item allows to understand inner structure of the group scheme $\bigwedgem{2}\GL_{n}$ in more explicit way.
\end{itemize}

For a general exterior power the Pl\"ucker ideal structure is much more complicated. Hence, it is not possible to generalize the results of the present paper. Structure of the Pl\"ucker ideal and its higher syzygies in the case of the exterior square  are described in~\cite{Khorosh} in details.

The present paper is organized as follows. In Section~\ref{prelim} we recall basic notation and set all main results pertaining to the group scheme  $\bigwedgem{2}\GL_{n}$. In the next Section we state and prove the main result: explicitly described equations for the scheme $\bigwedgem{2}\GL_{n}$. The last Section is devoted to translation of the algebraic structure of equations into weight diagram terms.

\section{Preliminaries}\label{prelim}
\noindent

Our notation is for the most part fairly standard in Chevalley group theory and coincides with the notation in~\cite{StepVavDecomp,VP-EOeven,VP-Ep,VP-EOodd}. We recall all necessary notion to read the present paper independently.

First, let $G$ be an arbitrary group. By a commutator of two elements we always understand \textit{the left-normed} commutator $[x,y]=xyx^{-1}y^{-1}$, where $x,y\in G$. Multiple commutators are also left-normed; in particular, $[x,y,z]=[[x,y],z]$. By ${}^xy=xyx^{-1}$ we denote \textit{the left conjugates} of $y$ by $x$. Similarly, by $y^x=x^{-1}yx$ we denote \textit{the right conjugates} of $y$ by $x$. 

For a subset $X\subseteq G$, we denote by $\langle X\rangle$ the subgroup it generates. The notation $H\leq G$ means that $H$ is a subgroup in $G$, while the notation $H\trianglelefteq G$ means that $H$ is a normal subgroup in $G$. For $H\leq G$, we denote by $\langle X\rangle^H$ the smallest subgroup in $G$ containing $X$ and normalized by $H$. For two groups $F,H\leq G$, we denote by $[F,H]$ their mutual commutator: $[F,H]=\langle [f,g], \text{ where } f\in F, h\in H\rangle.$

Also, we need some elementary ring theory notation. Let $R$ be an arbitrary associative ring with 1. By default, it is assumed to be commutative. By an ideal $I$ of the ring $R$ we understand \textit{the two-sided ideal} and this is denoted by $I\trianglelefteq R$. As usual, let $R^{*}$ be the multiplicative group of the ring $R$. Let $M(m, n, R)$ be the $R$-bimodule of $(m \times n)$-matrices with entries in $R$, and let $M(n, R) = M(n, n, R)$ be the full matrix ring of degree $n$ over $R$. By $\GL_n(R)=\M(n,R)^{*}$ we denote the general linear group. As usual, $a_{i,j}$ denotes the entry of a matrix $a$ at the position  $(i,j)$, where $1\leq i,j\leq n$. 

By $[n]$ we denote a set $\{1,2,\ldots, n\}$ and by $\bigwedgem{m}[n]$ we denote the exterior power of the set $[n]$. Elements of $\bigwedgem{m}[n]$ are unordered\footnote{In the sequel, we arrange them in the ascending order.} subsets $I\subseteq [n]$ of cardinality $m$ without repeating entries:
$$\bigwedgem{m}[n] = \{ (i_{1}, i_{2}, \ldots , i_{m})\; |\; i_{j} \in [n], i_{j} \neq i_{l} \}.$$  

Let $I = \{i_{1} < \ldots < i_{v}\} \in \bigwedgem{v}[n]$, $J = \{j_{1} < \ldots < j_{u}\} \in \bigwedgem{u}[n]$; then by $\sign(I,J)$ we denote the sign of the permutation $(i_{1}, i_{2}, \dots, i_{v} , j_{1}, \dots , j_{u})$.

Also, for a matrix $a \in \GL_{n}(R)$ and for sets $I$, $J$ from $\bigwedgem{m}[n]$, define a minor $M_{I}^{J}(a)$ of the matrix $a$ as follows. $M_{I}^{J}(a)$ equals the determinant of a submatrix formed by rows from the set $I$ and columns from the set $J$. 

\subsection{Exterior power of the general linear group}

In this Section we describe the exterior square of the general linear group over a commutative ring $R$; details can be found in~\cite{LubNekover1, VavPere}. In the sequel, we keep the rank $n$ of the base general linear group $\GL_{n}$.

Let $R^n$ be a right $R$--module with the basis $\{e_1,\ldots,e_n\}$. Consider the standard action of the group $\GL_{n}(R)$ on $R^n$. Define an exterior square of an $R$--module as follows. Basis of this module is all exterior products $e_i\wedge e_j$, $1~\leq~i~\neq~j~\leq~n$ and $e_{i} \wedge e_{j} = -e_{j} \wedge e_{i}$. The rank of this module equals the binomial coefficient $\binom{n}{2}$. We denote this number by $N$.

Now, we define an action of the group $\GL_{n}(R)$ on $\bigwedgem{2}(R^n)$. Firstly, we define this action on elements of the basis by the rule
$$g(e_i\wedge e_j):=(ge_i)\wedge(ge_j) \text{ for any } g\in \GL_{n}(R) \text{ and } 1\leq i\neq j\leq n.$$
Secondly, we extend this action by linearity to the whole module $\bigwedgem{2}(R^n)$. Finally, using this action we define a subgroup $\bigwedgem{2}\big(\GL_{n}(R)\big)$ of the general linear group $\GL_{N}(R)$.

In other words, let us consider the Cauchy--Binet homomorphism
$$\bigwedgem{2}:\GL_{n}(R)\longrightarrow \GL_{N}(R),$$
taking each matrix $x\in \GL_{n}(R)$ to the matrix $\bigwedgem{2}(x)\in \GL_{N}(R)$. Elements of $\bigwedgem{2}(x)\in \GL_{N}(R)$ are all second order minors of the matrix $x$. Then the group $\bigwedgem{2}\big(\GL_{n}(R)\big)$ is an image of the general linear group under the Cauchy--Binet homomorphism. It is natural to index elements of the matrix $\bigwedgem{2}(x)$ by pairs of elements of the set $\bigwedgem{2}[n]$:
$$\left(\bigwedgem{2}(x)\right)_{I,J}=\left(\bigwedgem{2}(x)\right)_{(i_1,i_2),(j_1,j_2)}=M_{i_1,i_2}^{j_1,j_2}(x) = x_{i_1, j_1}\cdot x_{i_2, j_2} - x_{i_1, j_2}\cdot x_{i_2, j_1}.$$

The last can be generalized to the case of an arbitrary $m$-th exterior power (we assume that $m \leq n$). Therefore, the [abstract] group $\bigwedgem{m}(\GL_{n}(R))$ is well defined for arbitrary commutative ring $R$. In a general case this group is not a group of points of any algebraic group. Hence, by $\bigwedgem{m}\GL_{n}$ we denote the corresponding algebraic group. It is the [Zariski] closure of all $\bigwedgem{m}(\GL_{n}(R))$. Let us remark that this group $\bigwedgem{m}\GL_{n}$ is a subgroup of the algebraic group $\GL_{\binom{n}{m}}$. It equals $\GL_{n}$ in representation with the highest weight $\varpi_{m}$.

We stress that to obtain equations on the scheme $\bigwedgem{m}\GL_{n}$ it is sufficient to obtain equations on the group $\bigwedgem{m}\GL_{n}(\mathbb{C})$. It is true due to the following fact. Obtained equations would be defined over $\mathbb{Z}$. Consequently, they determine the group as a scheme over $\mathrm{Spec}(\mathbb{Z})$.

For an arbitrary ring $R$ a group of $R$-points of the scheme $\bigwedgem{m}\GL_{n}(R)$ is strictly greater than the corresponding group-theoretic image and is strictly lesser than a group of $R$-points of the ambient group scheme:

\begin{align*}
\bigwedgem{m} \big(\GL_{n}(R)\big) &< \bigwedgem{m}\GL_{n}(R) < \GL_{\binom{n}{m}}(R).
\end{align*}

We refer the reader to~\cite{VavPere} for more precise results about the difference between the last three groups. We formulate the main result in the following theorem

\begin{theorem}\label{Pere}
In the previous notation: 
\begin{enumerate}[label=(\arabic*), ref=\thetheorem(\arabic*)]
\item For any $n \geq 4$, we have an isomorphism of affine group schemes 
$$\bigwedgem{2}\GL_n \cong \GL_{n}/\mu_{2}.$$
\item The quotient group $\faktor{\bigwedgem{2}\GL_n (R)}{\bigwedgem{2}(\GL(n,R))}$ contains a copy of the group $\faktor{R^{*}}{R^{*2}}$, and the further quotient modulo this copy is a subgroup of the Picard group $\Pic(R)$, consisting of invertible modules $P$ over $R$ such that 
$$P^{\otimes 2}=R,\quad P^n=R^n.$$
\item\label{Extens} For any element $x$ of the group $\bigwedgem{2}\GL_n (R)$ there exists a finite extension $R'/R$ such that for some $y \in \GL_{n}(R')$, we have 
$$x = \bigwedgem{2}(y).$$
\end{enumerate}
\end{theorem}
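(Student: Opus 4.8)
The plan is to deduce all three parts from the central short exact sequence of affine group schemes
$$1 \longrightarrow \mu_{2} \longrightarrow \GL_{n} \longrightarrow \GL_{n}/\mu_{2} \longrightarrow 1,$$
so the first and decisive step is to identify the scheme-theoretic kernel of the Cauchy--Binet homomorphism $\bigwedgem{2}\colon \GL_{n}\to\GL_{N}$ with the central copy $\mu_{2}=\{\lambda I : \lambda^{2}=1\}$. Since this kernel is a closed normal subgroup scheme and all equations are defined over $\mathbb{Z}$, I would compute it first over $\mathbb{C}$: if $\bigwedgem{2}(g)$ acts trivially on $\bigwedgem{2}(\mathbb{C}^{n})$ then $ge_{i}\wedge ge_{j}=e_{i}\wedge e_{j}$ for all $i<j$, so intersecting the planes $\langle ge_{i},ge_{j}\rangle=\langle e_{i},e_{j}\rangle$ over varying $j$ forces $ge_{i}\in\langle e_{i}\rangle$; hence $g$ is diagonal and $\lambda_{i}\lambda_{j}=1$ for $i\neq j$, which gives $g=\pm I$ once $n\geq 3$. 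The linearization of the same computation shows the Lie algebra of the kernel vanishes, so $\ker\bigwedgem{2}=\mu_{2}$. Then $\bigwedgem{2}$ factors as $\GL_{n}\twoheadrightarrow\GL_{n}/\mu_{2}\hookrightarrow\GL_{N}$, the second arrow being a monomorphism of affine algebraic groups and hence a closed immersion; over $\mathbb{C}$ the first arrow is surjective on points because $H^{1}(\mathbb{C},\mu_{2})=\mathbb{C}^{*}/\mathbb{C}^{*2}=1$, so the Zariski closure of $\bigwedgem{2}(\GL_{n}(\mathbb{C}))$ is $(\GL_{n}/\mu_{2})(\mathbb{C})$, and passing to $\mathbb{Z}$ yields (1). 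The small ranks are degenerate and would be treated separately ($\bigwedgem{2}$ is the determinant for $n=2$ and a twisted dual representation for $n=3$).

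For (2) and (3) I would run the long exact sequence of fppf cohomology attached to the exact sequence above; since $\mu_{2}$ is central the connecting map is a group homomorphism, and the sequence reads
$$1\to\mu_{2}(R)\to\GL_{n}(R)\to\bigwedgem{2}\GL_{n}(R)\xrightarrow{\ \delta\ }H^{1}_{\mathrm{fppf}}(R,\mu_{2})\to H^{1}_{\mathrm{fppf}}(R,\GL_{n}).$$
The subgroup $\bigwedgem{2}(\GL_{n}(R))$ is exactly $\ker\delta$, so the quotient in (2) is $\operatorname{im}\delta=\ker\!\big(H^{1}(R,\mu_{2})\to H^{1}(R,\GL_{n})\big)$. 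Kummer theory gives a short exact sequence $1\to R^{*}/R^{*2}\to H^{1}(R,\mu_{2})\to {}_{2}\Pic(R)\to1$, while $H^{1}(R,\GL_{n})$ classifies rank-$n$ projective modules and the transition map sends the class of a $\mu_{2}$-torsor $T$ to $T\times^{\mu_{2}}R^{n}\cong P^{\oplus n}$, where $P$ is the associated invertible module with $P^{\otimes 2}\cong R$. A class in the image of $R^{*}$ produces a \emph{trivial} line bundle $P$, hence a free module, so all of $R^{*}/R^{*2}$ lies in the kernel --- this is the claimed copy. Modulo it, the remaining part injects into ${}_{2}\Pic(R)\subseteq\Pic(R)$ and, by the description of $\delta$, is exactly the set of invertible $P$ with $P^{\otimes 2}\cong R$ and $P^{\oplus n}\cong R^{n}$, which is (2). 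For (3): given $x\in\bigwedgem{2}\GL_{n}(R)$, take $T$ to be the fibre of $\GL_{n}\to\GL_{n}/\mu_{2}$ over $x$, a $\mu_{2}$-torsor over $\operatorname{Spec}R$; it is affine and finite flat of rank $2$, so $T=\operatorname{Spec}R'$ with $R'$ a finite faithfully flat $R$-algebra. A torsor is trivial over itself, so $\delta(x)$ vanishes after base change to $R'$, which means $x$ lifts to some $y\in\GL_{n}(R')$ with $x=\bigwedgem{2}(y)$.

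The step I expect to be the genuine obstacle is the scheme-theoretic bookkeeping inside (1): one must verify that the ``Zariski closure over $\operatorname{Spec}\mathbb{Z}$'' in the definition of $\bigwedgem{2}\GL_{n}$ really coincides with the smooth quotient $\GL_{n}/\mu_{2}$ --- that the scheme-theoretic kernel of $\bigwedgem{2}$ is $\mu_{2}$ including in characteristic $2$, that $\GL_{n}/\mu_{2}\to\GL_{N}$ is a closed immersion, and that the closure is $\mathbb{Z}$-flat --- rather than merely agreeing with $\GL_{n}/\mu_{2}$ on points over each field. Once this identification is secured, parts (2) and (3) are essentially formal consequences of the cohomology of $\mu_{2}$-torsors.
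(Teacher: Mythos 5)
The paper does not prove this theorem at all: it is quoted as background from the reference \cite{VavPere} (``We refer the reader to \cite{VavPere} \dots We formulate the main result in the following theorem''), so there is no internal proof to compare against. Your proposal is, in outline, a correct reconstruction of the standard argument behind that cited result: identifying the scheme-theoretic kernel of the Cauchy--Binet map with the central $\mu_{2}$, writing $1\to\mu_{2}\to\GL_{n}\to\GL_{n}/\mu_{2}\to 1$, and then reading off (2) and (3) from the fppf cohomology sequence together with Kummer theory ($1\to R^{*}/R^{*2}\to H^{1}(R,\mu_{2})\to{}_{2}\Pic(R)\to 1$) and the fact that the composite $\mu_{2}\hookrightarrow\mathbb{G}_{m}\hookrightarrow\GL_{n}$ sends the class of $P$ to $P^{\oplus n}$; the torsor-fibre construction for (3) is also the right mechanism and does produce a finite flat degree-$2$ extension $R'/R$ over which $x$ lifts. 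You correctly flag the one place where real work remains, namely that the object the paper actually defines (the Zariski closure of the images $\bigwedgem{2}(\GL_{n}(R))$ inside $\GL_{N}$, as a scheme over $\mathbb{Z}$) coincides with the flat quotient $\GL_{n}/\mu_{2}$, including in characteristic $2$ and including the claim that $\GL_{n}/\mu_{2}\to\GL_{N}$ is a closed immersion over $\mathbb{Z}$ and not merely over fields; as written this step is asserted rather than carried out, and it is exactly the content one would have to import from \cite{VavPere} (or redo) to make the proof complete. Within the conventions of this paper, citing that identification is acceptable, so your argument is sound, but be aware that points (2)--(3) silently use the equality $\bigwedgem{2}\GL_{n}(R)=(\GL_{n}/\mu_{2})(R)$ of $R$-point groups, i.e.\ they depend on the full strength of the identification in (1), not just on its statement over $\mathbb{C}$.
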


\subsection{Elementary group and its exterior powers}
The elementary group $\E(n, R)$ plays a special role among subgroups of the general linear group. As always, elements of this group are characterized by the following fact. Technically cumbersome calculations for the generic element of the group $\GL_{n}$ are much more transparent and at the same time non-trivial, as, for example, for torus elements.

Recall that $e$ denotes the identity matrix and $e_{i,j}$ denotes the standard matrix unit, i.\,e., the matrix that has 1 at the position $(i,j)$ and zeros elsewhere.

By $t_{i,j}(\xi)$ we denote an elementary transvection, i.\,e., a matrix of the form $t_{i,j}(\xi)=e+\xi e_{i,j}$, $1\leq i\neq j\leq n$, $\xi\in R$. In the sequel, we use (without any special reference) standard relations~\cite{StepVavDecomp} among elementary transvections such as
\begin{enumerate}
\item the additivity:
$$t_{i,j}(\xi)t_{i,j}(\zeta)=t_{i,j}(\xi+\zeta).$$
\item the Chevalley commutator formula:
$$[t_{i,j}(\xi),t_{h,k}(\zeta)]=
\begin{cases}
e,& \text{ if } j\neq h, i\neq k,\\
t_{i,k}(\xi\zeta),& \text{ if } j=h, i\neq k,\\
t_{h,j}(-\zeta\xi),& \text{ if } j\neq h, i=k.
\end{cases}$$
\end{enumerate}

The subgroup $\E(n,R)\leq \GL_{n}(R)$ generated by all elementary transvections, is called the \textit{(absolute)} elementary group:
$$\E(n,R)=\langle t_{i,j}(\xi), 1\leq i\neq j\leq n, \xi\in R\rangle.$$

It is well known (due to Andrei Suslin~\cite{SuslinSerreConj}) that the elementary group is normal in the general linear group $\GL_{n}(R)$ for $n \geq 3$. As a straightforward corollary, we have the following result.

\begin{lemma}\label{SuslinFor2}
The image of the elementary group is normal in the image of the general linear group under the exterior square homomorphism:
$$\bigwedgem{2}\left(\E(n,R)\right)\trianglelefteq \bigwedgem{2}(\GL_{n}(R)).$$
\end{lemma}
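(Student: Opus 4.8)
The plan is to deduce this directly from Suslin's normality theorem by pushing it through the Cauchy--Binet homomorphism. First I would recall the setup: by assumption $n \geq 3$, so by Suslin's theorem $\E(n,R) \trianglelefteq \GL_n(R)$; that is, for every $g \in \GL_n(R)$ and every $x \in \E(n,R)$ we have ${}^{g}x \in \E(n,R)$. The key elementary observation is that the Cauchy--Binet map $\bigwedgem{2} \colon \GL_n(R) \to \GL_N(R)$ is a group homomorphism, hence it carries conjugation to conjugation: for all $g, x \in \GL_n(R)$,
\[
{}^{\bigwedgem{2}(g)}\bigl(\bigwedgem{2}(x)\bigr) = \bigwedgem{2}(g)\,\bigwedgem{2}(x)\,\bigwedgem{2}(g)^{-1} = \bigwedgem{2}\bigl({}^{g}x\bigr).
\]

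Next I would argue that $\bigwedgem{2}\bigl(\E(n,R)\bigr)$ is a subgroup of $\bigwedgem{2}\bigl(\GL_n(R)\bigr)$: it is the image under a homomorphism of a subgroup, so this is immediate, and both sit inside $\GL_N(R)$. To check normality it suffices to conjugate generators of $\bigwedgem{2}\bigl(\GL_n(R)\bigr)$ against elements of $\bigwedgem{2}\bigl(\E(n,R)\bigr)$. An arbitrary element of $\bigwedgem{2}\bigl(\GL_n(R)\bigr)$ has the form $\bigwedgem{2}(g)$ for some $g \in \GL_n(R)$, and an arbitrary element of $\bigwedgem{2}\bigl(\E(n,R)\bigr)$ has the form $\bigwedgem{2}(x)$ for some $x \in \E(n,R)$. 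Combining the displayed identity with Suslin's theorem, ${}^{\bigwedgem{2}(g)}\bigl(\bigwedgem{2}(x)\bigr) = \bigwedgem{2}\bigl({}^{g}x\bigr)$ lies in $\bigwedgem{2}\bigl(\E(n,R)\bigr)$ because ${}^{g}x \in \E(n,R)$. Hence $\bigwedgem{2}\bigl(\E(n,R)\bigr)$ is normalized by all of $\bigwedgem{2}\bigl(\GL_n(R)\bigr)$, which is exactly the claim.

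There is essentially no obstacle here: the only point requiring a word of care is that normality of the image is being verified by conjugating with image elements only (not with all of $\GL_N(R)$), which is precisely what the statement asserts, so the functoriality of $\bigwedgem{2}$ as a group homomorphism does all the work. If one wanted, one could note additionally that $\bigwedgem{2}$ restricted to $\E(n,R)$ kills only the central $\mu_2$ (for $n$ even) or is injective, but this is not needed: homomorphic images of normal subgroups are normal in the image, and the hypothesis $n \geq 3$ is exactly what licenses the appeal to Suslin. I would present the argument in three short lines as above.
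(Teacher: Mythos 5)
Your argument is correct and is exactly what the paper intends: Lemma~\ref{SuslinFor2} is stated there as a straightforward corollary of Suslin's theorem, the point being precisely that a group homomorphism such as $\bigwedgem{2}$ carries conjugation to conjugation, so the image of a normal subgroup is normal in the image of the group. No difference in approach, and no gaps.
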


Let us consider a structure of the group $\bigwedgem{2}\E(n,R)$ in details. The following proposition can be obtained by the very definition of $\bigwedgem{2}\big(\GL_{n}(R)\big)$.

\begin{proposition} \label{ImageOfTransvFor2}
Let $t_{i,j}(\xi)$ be an elementary transvection. For $n\geq 3$ the transvection $\bigwedgem{2}t_{i,j}(\xi)$ can be presented as the following product:
$$\bigwedgem{2}t_{i,j}(\xi)=\prod\limits_{k=1}^{i-1} t_{ki,kj}(\xi)\,\cdot\prod\limits_{l=i+1}^{j-1}t_{il,lj}(-\xi)\,\cdot\prod\limits_{m=j+1}^n t_{im,jm}(\xi) \eqno(1)$$
for any $1\leq i<j\leq n$.
\end{proposition}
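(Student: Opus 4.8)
The plan is to compute the matrix $\bigwedgem{2}t_{i,j}(\xi)$ directly from the definition of the Cauchy–Binet homomorphism and to match it, entry by entry, against the claimed product of elementary transvections in $\GL_N(R)$. Recall that for any $g \in \GL_n(R)$ the entry of $\bigwedgem{2}(g)$ at position $(I,J)$, with $I=\{i_1<i_2\}$ and $J=\{j_1<j_2\}$, is the $2\times 2$ minor $M_{i_1,i_2}^{j_1,j_2}(g) = g_{i_1,j_1}g_{i_2,j_2} - g_{i_1,j_2}g_{i_2,j_1}$. For $g = t_{i,j}(\xi) = e + \xi e_{i,j}$ the only off-diagonal nonzero entry is $g_{i,j} = \xi$, so each such minor is easy to evaluate: I would split into cases according to how the two-element column set $J$ meets $\{i,j\}$. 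The upshot is that $\bigwedgem{2}t_{i,j}(\xi)$ is the identity matrix plus $\xi$ times a sum of matrix units $e_{I,J}$, where the nonzero positions are exactly those of the form $I = (J \setminus \{j\}) \cup \{i\}$ with $j \in J$, $i \notin J$; the accompanying sign is $\sign$ of the relevant transposition, which explains why some transvections appear with $-\xi$.

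Next I would observe that the factors on the right-hand side of $(1)$ pairwise commute. Indeed each factor is $t_{I,J}(\pm\xi)$ where $I$ and $J$ differ in exactly the positions $i$ and $j$; running $k$ over $[i-1]$, $l$ over $\{i+1,\dots,j-1\}$ and $m$ over $\{j+1,\dots,n\}$ exhausts, without repetition, all indices in $[n]\setminus\{i,j\}$, so the row labels $I$ (and likewise the column labels $J$) occurring in distinct factors are distinct, and moreover no $I$ of one factor equals the $J$ of another (the former contains $i$ and not $j$, so does the latter — but with the third index distinct — hence for the Chevalley commutator both ``$j \neq h$'' and ``$i \neq k$'' hold). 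Therefore by the additivity relation and the vanishing case of the Chevalley commutator formula the product is simply $e + \xi \sum e_{I_\bullet,J_\bullet}$, i.e. the product telescopes into identity-plus-a-sum-of-matrix-units with no correction terms.

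It then remains to check that the index set appearing in this product coincides with the index set of nonzero off-diagonal entries of $\bigwedgem{2}t_{i,j}(\xi)$ computed in the first step, and that the signs agree. This is a bookkeeping step: a column label $J$ contributes a nonzero entry precisely when $j \in J$ and $i \notin J$, and writing the third element of $J$ as $k$, $l$, or $m$ according to whether it lies below $i$, strictly between $i$ and $j$, or above $j$, reproduces exactly the three products in $(1)$; the sign $\sign(I,J)$ of the transposition moving $i$ past the intermediate index is $+1$ in the first and third ranges and $-1$ in the middle range, matching the $-\xi$ in the second product. I do not expect any genuine obstacle here — the only thing to be careful about is the ordering convention (sets written in ascending order) and the resulting sign when $i$ sits between the two elements of $J$, which is exactly the case that forces the middle factor to carry $-\xi$. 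The hypothesis $n\ge 3$ is used only to guarantee that the index set $[n]\setminus\{i,j\}$ is nonempty, so that the statement is not vacuous.
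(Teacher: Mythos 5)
Your proposal is correct and takes essentially the same route as the paper, which offers no written argument beyond remarking that the identity follows ``by the very definition'' of $\bigwedgem{2}\big(\GL_{n}(R)\big)$ --- that is, precisely the direct evaluation of the second-order minors of $t_{i,j}(\xi)$ (diagonal entries $1$, off-diagonal entries $\pm\xi$ at positions $(\{i,a\},\{a,j\})$ for $a\notin\{i,j\}$, with the sign coming from the ascending-order convention when $i<a<j$) together with the vanishing of cross terms that you spell out. One parenthetical in your commutation step is worded backwards (the column labels of the factors contain $j$ and not $i$, the opposite of the row labels), but the fact you actually need --- that no row label of one factor equals a column label of another, so all products $e_{I,J}e_{I',J'}$ of distinct factors vanish --- is immediate from exactly that disparity, and your sign bookkeeping matches formula $(1)$.
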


\begin{remark}
For $i>j$ the similar equality holds:
$$\bigwedgem{2}t_{i,j}(\xi)=\prod\limits_{k=1}^{j-1} t_{ki,kj}(\xi)\,\cdot\prod\limits_{l=j+1}^{i-1}t_{li,jl}(-\xi)\,\cdot\prod\limits_{m=i+1}^n t_{im,jm}(\xi) \eqno(1')$$
\end{remark}

\begin{remark}
A commutator of any two transvections from the right-hand sides of formulas $(1)$ and $(1')$ equals~1. Therefore the commutator with the transvection $\bigwedgem{2}t_{i,j}(\xi)$ is equal to 1 as well.
\end{remark} 

It follows from the proposition that $\bigwedgem{2}t_{i,j}(\xi)\in \E^{n-2}(N,R)$, where a set $\E^M(N,R)$ consists of products of $M$ or less elementary transvections.

Similarly, we can define any exterior powers of the elementary group and calculate exterior powers of elementary transvections (see~\cite{LubNekover1} for details).

\section{Equations}\label{eq}
\noindent

In this Section, we will define the affine group scheme $\bigwedgem{2}\GL_n$ via explicit equations. As mentioned above, it is sufficient to obtain equations on the group of points $\bigwedgem{2}\GL_{n}(\mathbb{C})$.

We will use the fact that the exterior square of the general linear group preserves the Pl\"ucker ideal. We briefly recall a relationship between the exterior square of $\GL_{n}(R)$ and the ideal $\Plu$ generated by the Pl\"ucker relations below. 

The Grassmann space $\Gr_{2, n}(\mathbb{C})$ is embedded in the projective space $\mathbb{P}(\bigwedgem{2}(\mathbb{C}^{n}))\cong \mathbb{P}(\mathbb{C}^{N})$ as a set of decomposable tensors. More precisely, a point of $\Gr_{2, n}(\mathbb{C})$  corresponding to an affine hull of vectors $\{v_{1}, v_{2}\}$ maps to a projective class of an indecomposable tensor $v_{1}\wedge v_{2}$. Under this embedding $\Gr_{2,n}(\mathbb{C})$ is a subvariety of $\mathbb{P}(\mathbb{C}^{N})$. It is defined by the classical Pl\"ucker equations.
Since the group $\GL_{n}(R)$ takes each decomposable tensor to a decomposable one, we see that the group $\bigwedgem{2}{(\GL_{n}(R))}$ preserves an ideal $\Plu=\Plu(n,R) \trianglelefteq R\,[p_{I}:\, I \in \bigwedgem{2}[n]]$, generated by the Pl\"ucker polynomials, where $R\,[p_{I}:\, I \in \bigwedgem{2}[n]]$ is a polynomial algebra with its standard grading by total degree of monomials.

This argument is also true for the general exterior power. But the case of the exterior square is characterized by the following fact. The Pl\"ucker ideal for the exterior square is relatively simple. The next Section is devoted to an explicit description of the structure of the last ideal.

\subsection{Structure of the Pl\"ucker ideal}

Note that the Pl\"ucker polynomials have the following simple form in the case of the exterior square.

The Pl\"ucker polynomials are numbered by pairs $(i, J) \in (\bigwedgem{1}[n]=[n], \bigwedgem{3}[n])$ and equal 
$$f_{i,J}(x) := \sum_{h = 1}^{3}(-1)^{h} \cdot x_{i\,j_{h}}x_{J\backslash\{j_{h}\}},$$
where $x_{ij}$ is the Pl\"ucker coordinates on the projective space $\mathbb{P}(\bigwedgem{2}(\mathbb{C}^{n}))$.

The structure of the Pl\"ucker ideal $\Plu := \langle f_{i,J} \rangle$ is described in the following theorem.

\begin{theorem}\label{Plu}
\begin{enumerate}
\item For any set $1 \leq i < j < k < l \leq n$ denote by $\Plu(i,j,k,l)$ a submodule of the $R$--module $\Plu$:
$$\Plu(i,j,k,l) = \left \langle f_{s,T}: \; s\sqcup T = \{i,j,k,l\} \right \rangle_{R}.$$ 
Then $\Plu(i,j,k,l)$ is one-dimensional. For this space we fix the canonical generator $f_{i, \{j,k,l\}}$. 
\item\label{canon} A set of homogeneous polynomials $\{f_{s,T}\}_{(s,T)}$ is a basis of the ideal $\Plu$ iff for any set $1 \leq i < j < k < l \leq n$ exactly one element $\alpha_{ijkl}\cdot f_{i,\{j,k,l\}}$ is selected, where $\alpha_{ijkl}$ is any invertible constant. In particular, the canonical basis of the ideal $\Plu$ is 
$$\{f_{i, \{j,k,l\}} : 1\leq i<j<k<l \leq n\}.$$
\item\label{condiff} $b = \sum\limits_{I,J} b_{I,J}\,x_{I}x_{J} \in \Plu$ whenever the following conditions are hold:
\begin{enumerate}
\item $b_{I,J}=0$, if the intersection of $I$ and $J$ is not empty;
\item $\sign(I,J)\cdot b_{I,J} =\sign(K,L)\cdot b_{K,L}$, if $I\sqcup J = K\sqcup L$.
\end{enumerate}
\end{enumerate}
\end{theorem}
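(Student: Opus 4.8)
The plan is to reduce all three parts to one fact — the Pl\"ucker quadric is supported on four-element index sets — together with one elementary monomial-separation observation. For \emph{Part (1)}, I would fix $Q=\{i<j<k<l\}$ and note there are exactly four generators supported on it, obtained by letting $s$ run through $Q$ with $T=Q\setminus\{s\}$. Expanding $f_{s,T}(x)=\sum_{h=1}^{3}(-1)^{h}x_{s\,t_{h}}x_{T\setminus\{t_{h}\}}$ and normalizing indices via $x_{ab}=-x_{ba}$, a direct computation yields $f_{s,\,Q\setminus\{s\}}=\varepsilon_{s}\,f_{i,\{j,k,l\}}$ with $\varepsilon_{s}=(-1)^{r-1}$ when $s$ is the $r$-th smallest element of $Q$; hence $\Plu(i,j,k,l)=R\cdot f_{i,\{j,k,l\}}$, and since $f_{i,\{j,k,l\}}=-x_{ij}x_{kl}+x_{ik}x_{jl}-x_{il}x_{jk}$ is a $\{0,\pm1\}$-combination of three \emph{distinct} monomials it has zero $R$-annihilator, so the module is free of rank one. (Conceptually this is just the classical fact that $\Gr_{2,4}\subset\mathbb{P}^{5}$ is a single quadric.) The linchpin for the rest is the remark that a squarefree quadratic monomial $x_{I}x_{J}$ with $I\cap J=\varnothing$ determines the four-element set $I\sqcup J$ (as the union of its two variables); so for distinct four-element sets the $f_{i,\{j,k,l\}}$ involve pairwise disjoint monomials, whence, as $\Plu$ is generated in degree two, the degree-two component of the ideal is
$$\Plu_{2}\;=\;\sum_{(s,T)}R\,f_{s,T}\;=\;\bigoplus_{1\leq i<j<k<l\leq n}R\cdot f_{i,\{j,k,l\}}\;\cong\;R^{\binom{n}{4}},$$
a free $R$-module.

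For \emph{Part (2)}: since $\Plu$ is generated by $\Plu_{2}$, a set of homogeneous degree-two polynomials is an ideal-basis of $\Plu$ exactly when it is an $R$-basis of the free module $\Plu_{2}$. By Part (1) a member $\alpha_{s,T}f_{s,T}$ lying over $Q$ is $\alpha_{s,T}\varepsilon_{s}\cdot f_{i,\{j,k,l\}}$, a scalar multiple of the canonical generator of the $Q$-summand; hence two members over the same $Q$ are dependent, and omitting some $Q$ fails to reach its summand. So a basis selects exactly one $(s,T)$ per four-element set, and because $R\cdot(\alpha f_{i,\{j,k,l\}})=R\,f_{i,\{j,k,l\}}$ iff $\alpha\in R^{*}$, the chosen coefficients (normalized as $\alpha_{ijkl}f_{i,\{j,k,l\}}$) must be invertible; conversely any such selection is visibly a basis, the canonical one being $\alpha_{ijkl}\equiv1$.

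For \emph{Part (3)}: given $b=\sum_{I,J}b_{I,J}x_{I}x_{J}$ satisfying (a)--(b), condition (a) deletes every term with $I\cap J\neq\varnothing$, so one groups the surviving sum by $Q:=I\sqcup J$. By (b) the quantity $\sign(I,J)\,b_{I,J}$ equals a single scalar $c_{ijkl}\in R$ over the three partitions of $Q$; and the three signs ($\sign(\{i,j\},\{k,l\})=1$, $\sign(\{i,k\},\{j,l\})=-1$, $\sign(\{i,l\},\{j,k\})=1$) give $\sum_{I\sqcup J=Q}\sign(I,J)\,x_{I}x_{J}=-f_{i,\{j,k,l\}}$. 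Hence $b=-\sum_{i<j<k<l}c_{ijkl}\,f_{i,\{j,k,l\}}$ (up to an irrelevant factor $2$ if the outer sum runs over ordered pairs), so $b\in\Plu_{2}\subseteq\Plu$. I expect no genuine obstacle here — the statement is bookkeeping around the one-dimensionality of the quadric on a four-element set and the monomial-separation remark; the only point demanding care is keeping the three sign conventions ($\sign(I,J)$, the $(-1)^{h}$ in $f_{s,T}$, and $x_{ab}=-x_{ba}$) mutually consistent in (1) and (3). (One also notes that for homogeneous $b$ the conditions (a)--(b) are necessary too, so the criterion is in fact an equivalence, though only sufficiency is used in the sequel.)
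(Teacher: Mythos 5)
Your proposal is correct and follows essentially the same route as the paper: verify that the four polynomials $f_{s,\,Q\setminus\{s\}}$ over a fixed four-element set $Q$ are $\pm f_{i,\{j,k,l\}}$ (your sign $\varepsilon_s=(-1)^{r-1}$ matches the paper's explicit computations), which gives parts (1) and (2), and then prove (3) by grouping the terms of $b$ according to $I\sqcup J$ and recognizing each group as a multiple of the canonical generator. The only difference is that you spell out details the paper leaves implicit (monomial separation across distinct four-element sets, freeness of $\Plu_{2}$, and the reduction of ``basis of the ideal'' to an $R$-basis of its degree-two component), which is fine.
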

\begin{proof}
First, note that for any $1 \leq i<j<k<l\leq n$, we have
\begin{align*}
f_{i,j,k,l}(x)& = -x_{ij}x_{kl}+x_{ik}x_{jl}-x_{il}x_{jk};\\
f_{j,i,k,l}(x)& = +x_{ij}x_{kl}+x_{jk}x_{il}-x_{jl}x_{ik} = -f_{i,jkl}(x);\\
f_{k,i,j,l}(x)& = +x_{ik}x_{jl}-x_{jk}x_{il}-x_{kl}x_{ij} = +f_{i,jkl}(x);\\
f_{l,i,j,k}(x)& = +x_{il}x_{jk}-x_{jl}x_{ik}+x_{kl}x_{ij} = -f_{i,jkl}(x).
\end{align*}
Therefore, we proved the first statement and the second one too.

To prove the last one we present any polynomial $b \in \Plu$ in the following form:
$$b = \sum\limits_{I,J}b_{I,J}x_Ix_J=\sum\limits_{\substack{\{i,j,k,l\}\\ i<j<k<l}}\sum\limits_{I\cup J=\{i,j,k,l\}}b_{I,J}x_Ix_J.$$
Using item~\ref{canon}, we get
$$b = \sum_{i<j<k<l} \sum\limits_{I\cup J=\{i,j,k,l\}}b_{I,J}x_Ix_J= \sum_{i<j<k<l} \theta_{\{i,j,k,l\}}(f)\cdot f_{i,jkl}.$$
In other words, $b_{I,J} = \sign(I,J)\cdot \theta_{I \cup J}(f)$. This completes the proof of item~\ref{condiff}.
\end{proof}

\subsection{Equations on group scheme}
In this Section we define the affine group scheme $\bigwedgem{2}\GL_n$ via equations. Let us consider the standard action $g \in \GL_{N}(R)$ on any Pl\"ucker polynomial $f_{i,J}(x)$:
$$g\circ f_{i,J}(x) = f_{i,J}(g(x)) = \sum\limits_{h=1}^3\sum\limits_{a<b}\sum\limits_{c<d}(-1)^h g_{ab,ij_h}g_{cd,J\backslash j_h}x_{ab}x_{cd}.$$
Hence, the condition $g\circ f_{i,J} \in \Plu$ is equivalent to
$$\sum\limits_{h=1}^3\sum\limits_{\substack{a<b\\c<d}}(-1)^hg_{ab,ij_h}g_{cd,J\backslash j_h}x_{ab}x_{cd}=\sum\limits_{h=1}^3\sum\limits_{i'<j'_1<j'_2<j'_3}(-1)^h \theta_{i'\cup J'}\;x_{i',j'_h}x_{J'\backslash j'_h}.$$
That can be rewritten as
$$\sum\limits_{A,C}x_Ax_C\left(\sum\limits_{h=1}^3(-1)^hg_{A,ij_h}g_{C,J\backslash j_h}\right)=\sum\limits_{i'<j'_1<j'_2<j'_3}\sum\limits_{h=1}^3x_{i',j'_h}x_{J'\backslash j'_h}(-1)^h\theta_{i'\cup J'}.$$
Denote by $a_{A,C}^{J} = a_{A,C}^{J}(g)$ the coefficient of the monomial $x_A x_C = x_{C} x_{A}$ in the last equality. Then
\begin{align*}
a_{A,C}^{J}=&\sum\limits_{h=1}^3(-1)^h(g_{A,ij_h}g_{C,J\backslash j_h}+g_{C,ij_h}g_{A,J\backslash j_h}) =\\
=&\sum\limits_{B\sqcup D = H} \sign(B,D) g_{A,B} g_{C,D}\;,
\end{align*}
where the last sum ranges over all unordered partitions of the set $J$ into disjoint pairs.

In general, for a matrix $g \in \GL_{N}(R)$ the numbers 
$$\left\{a_{A,C}^{H} = \sum\limits_{B\sqcup D = H} \sign(B,D) g_{A,B} g_{C,D}\right\}_{A,C,H}$$ 
is called \textbf{exterior numbers of a matrix} $g$.

\begin{theorem}\label{lemmaIFF}
The following conditions are equivalent:
\begin{enumerate}
\item $g \in \bigwedgem{2}\GL_n (R)$.
\item For any $H \in \bigwedgem{4}[n]$ and for any $A,C \in \bigwedgem{2}[n]$,
		\begin{itemize}
		\item if $A\cap C\neq \emptyset$, then $a_{A,C}^{H}(g) = 0$;
		\item if $A\cap C= \emptyset$, then $a_{A,C}^{H}(g) = \sign(A,C)\cdot \theta_{A \cup C}^{H}(g)$, where $\theta_{*}^{\bullet}$ is a function of arguments $*$ and $\bullet$.
\end{itemize}
\end{enumerate}
\end{theorem}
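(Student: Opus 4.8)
The plan is to recognise condition~(2) as nothing but an unravelling of the single statement ``$g$ preserves the Pl\"ucker ideal $\Plu$'', and then to identify the subgroup of $\GL_{N}$ stabilising $\Plu$ with $\bigwedgem{2}\GL_n$ (the delicate point, especially in low rank).

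First I would set up the dictionary between condition~(2) and $\Plu$. Fix $H\in\bigwedgem{4}[n]$, write $H=\{i\}\sqcup J$ with $i\in[n]$ and $J\in\bigwedgem{3}[n]$, and recall from the computation preceding the statement that $g\circ f_{i,J}=\sum_{A,C}a_{A,C}^{H}(g)\,x_{A}x_{C}$, the sum being over unordered pairs of $2$-subsets and $a_{A,C}^{H}(g)$ the symmetrised coefficient of the monomial $x_{A}x_{C}$. By item~\ref{condiff} of Theorem~\ref{Plu}, this degree-two polynomial belongs to $\Plu$ precisely when $a_{A,C}^{H}(g)=0$ for $A\cap C\neq\emptyset$ and $\sign(A,C)\,a_{A,C}^{H}(g)$ depends only on the four-element set $A\cup C$ (and on $H$); writing $\theta_{A\cup C}^{H}(g)$ for that common value returns exactly the two clauses of~(2). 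Now $\Plu=\langle f_{i,J}\rangle$ and $g\circ(\blank)$ is a graded $\mathbb{C}$-algebra automorphism of the Pl\"ucker polynomial ring, so $g\circ\Plu=\langle g\circ f_{i,J}\rangle$; hence ``$g\circ f_{i,J}\in\Plu$ for all $(i,J)$'' is the same as $g\circ\Plu\subseteq\Plu$, and since each graded component of $\Plu$ is finite-dimensional and $g\circ(\blank)$ is invertible this already forces $g\circ\Plu=\Plu$. The upshot is: \emph{$g$ satisfies~(2) if and only if $g$ stabilises $\Plu$, equivalently $g$ preserves the affine cone $V(\Plu)$ of decomposable bivectors.}

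The implication $(1)\Rightarrow(2)$ is then the easy direction. The relations in~(2) are polynomial identities in the entries of $g$ with integer coefficients, so it suffices to verify them on the Zariski-dense subset $\bigwedgem{2}(\GL_n(\mathbb{C}))$ of $\bigwedgem{2}\GL_n(\mathbb{C})$; there $g=\bigwedgem{2}(y)$ takes each $v\wedge w$ to $yv\wedge yw$, hence maps decomposable tensors to decomposable tensors and stabilises $V(\Plu)$ --- which is exactly what we need. (One may equally reduce to $g=\bigwedgem{2}(y)$ over a finite extension of $R$ using Theorem~\ref{Pere} and descend, since all $a_{A,C}^{H}(g)$ already lie in $R$.)

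The real content is the converse $(2)\Rightarrow(1)$, which by the dictionary above amounts to: a linear automorphism $g$ of $\bigwedgem{2}(\mathbb{C}^{n})$ preserving the cone of decomposable bivectors lies in $\bigwedgem{2}\GL_n(\mathbb{C})$. This suffices, since the equations are defined over $\mathbb{Z}$ and $\bigwedgem{2}\GL_n(\mathbb{C})=\bigwedgem{2}(\GL_n(\mathbb{C}))$ by Theorem~\ref{Pere}. Concretely one reconstructs $y$: each $g(e_{i}\wedge e_{j})$ is decomposable, say $g(e_{i}\wedge e_{j})=v_{ij}\wedge w_{ij}$; since $g$ is linear it sends the lines of the Grassmannian to lines, so the $2$-planes $\langle v_{ij},w_{ij}\rangle\subseteq\mathbb{C}^{n}$ inherit all the incidences of the coordinate planes $\langle e_{i},e_{j}\rangle$, and this rigidity pins down a single $y\in\GL_n(\mathbb{C})$ with $ye_{i}\wedge ye_{j}$ proportional to $g(e_{i}\wedge e_{j})$; adjusting the common scalar (using that $\bigwedgem{2}(\GL_n(\mathbb{C}))$ already contains every scalar matrix of $\GL_{N}(\mathbb{C})$) gives $g=\bigwedgem{2}(y)$. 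This last step --- the rigidity of the Pl\"ucker embedding, i.e.\ that the Pl\"ucker relations ``suffice'' to characterise the exterior square --- is the main obstacle; it is also where the low rank $n=4$ is special, since there $V(\Plu)$ is merely a nondegenerate quadric cone whose linear stabiliser is strictly larger than $\bigwedgem{2}\GL_4(\mathbb{C})$, so that case must either be excluded or handled by the separate structural description of $\bigwedgem{2}\GL_4$ recorded in Theorem~\ref{Pere}.
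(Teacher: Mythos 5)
Your reduction of condition (2) to the single statement ``$g$ stabilises the ideal $\Plu$'' is exactly the paper's first step: the computation of the exterior numbers as coefficients of $g\circ f_{i,J}$ together with Theorem~\ref{Plu}(3) is how the paper phrases it, and your easy direction via Zariski density and reduction to $\mathbb{C}$ matches the paper's setup. Where you genuinely diverge is the hard inclusion. The paper identifies the stabiliser of $\Plu$ with $\bigwedgem{2}\GL_n$ by citing the classification of maximal subgroups~\cite[Table~1]{SeitzMaxSub}, whereas you propose a direct geometric reconstruction of $y$ from the action of $g$ on the cone of decomposable bivectors. That route is viable --- it is Chow's theorem on automorphisms of Grassmannians, i.e.\ the classical description of linear preservers of decomposable bivectors --- and it is more elementary and self-contained than importing a classification theorem; but be aware that the phrase ``this rigidity pins down a single $y$'' carries the entire weight of that theorem, and as sketched it has a hole: knowing that the planes $P_{ij}$ corresponding to $g(e_i\wedge e_j)$ pairwise meet in lines does not by itself produce common lines $\langle ye_i\rangle$, since a family of planes pairwise meeting in lines may instead lie in a common $3$-dimensional subspace. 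Distinguishing these two configurations (equivalently, the two families of maximal linear subspaces on the cone, which a linear automorphism preserving the cone must respect, and which are separated by a dimension count only when $n\geq 5$) is precisely the argument you would have to supply, or else cite, so your proof is at the same epistemic level as the paper's citation-based one rather than strictly more complete. Your final remark about $n=4$ is correct and is in fact sharper than the paper's statement: for $n=4$ the ideal $\Plu$ is generated by one quadric, its stabiliser in $\GL_6$ is the full similitude orthogonal group, which strictly contains $\bigwedgem{2}\GL_4$, so condition (2) does not imply (1) there; the theorem implicitly requires $n\geq 5$ (or a separate treatment of $n=4$ via Theorem~\ref{Pere}), a restriction the paper's proof needs as well but does not state.
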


To finish the proof of this theorem, we have to show that $\bigwedgem{2}\GL_n$ coincides with the stabilizer of the ideal $\Plu$.
This fact follows from the classification~\cite[Table 1]{SeitzMaxSub} (examples of such argument can be found in~\cite[Proposition 7]{LubNekover1} and~\cite[Proof of Theorem 2]{VavLuzgE6}).

Let us re-prove the following fact as an example of calculation of exterior numbers.
\begin{theorem}\label{again}
$\bigwedgem{2}\GL_n (R) \geqslant \bigwedgem{2}(\GL_{n}(R)).$
\end{theorem}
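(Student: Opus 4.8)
The plan is to invoke the criterion of Theorem~\ref{lemmaIFF}: it suffices to take an arbitrary $y \in \GL_{n}(R)$, set $g := \bigwedgem{2}(y)$, and verify that the exterior numbers $a_{A,C}^{H}(g)$ satisfy the two conditions of item~(2). First I would note that $g$ is a genuine element of $\GL_{N}(R)$, since by the Cauchy--Binet formula $\bigwedgem{2}(y)\cdot\bigwedgem{2}(y^{-1}) = \bigwedgem{2}(e) = e$. By the definition of the Cauchy--Binet homomorphism, $g_{A,B} = M_{A}^{B}(y)$ for all $A,B \in \bigwedgem{2}[n]$, so that
$$a_{A,C}^{H}(g) = \sum_{B\sqcup D = H}\sign(B,D)\,M_{A}^{B}(y)\,M_{C}^{D}(y).$$

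The key observation is that this sum is a Laplace expansion. Fix $H = \{h_{1}<h_{2}<h_{3}<h_{4}\}$ and let $Y = Y_{A,C,H}$ be the $4\times 4$ matrix whose first two rows are the rows $a_{1}<a_{2}$ of $y$ restricted to the columns $H$ and whose last two rows are the rows $c_{1}<c_{2}$ of $y$ restricted to the columns $H$. Expanding $\det Y$ along its first two rows expresses it as $\sum_{B}\varepsilon(B)\,M_{A}^{B}(y)\,M_{C}^{H\setminus B}(y)$, the sum over two-element $B\subseteq H$, where $\varepsilon(B)$ is the sign of the permutation of positions $1,2,3,4$ sending $B$ (increasing) followed by $H\setminus B$ (increasing) to $h_{1},h_{2},h_{3},h_{4}$. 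Since $B$ and $D:=H\setminus B$ lie inside $H$, their relative order inside $H$ coincides with their relative order inside $[n]$, so $\varepsilon(B) = \sign(B,D)$ in the notation of Section~\ref{prelim}. Hence $a_{A,C}^{H}(g) = \det Y_{A,C,H}$, and the two conditions can now be read off directly. If $A\cap C\neq\emptyset$ (including the case $A=C$), then $Y_{A,C,H}$ has a repeated row, namely the one indexed by an element of $A\cap C$, so $a_{A,C}^{H}(g) = 0$. If $A\cap C=\emptyset$, write $A\cup C = \{m_{1}<m_{2}<m_{3}<m_{4}\}$; then $Y_{A,C,H}$ is obtained from the honest submatrix of $y$ on rows $m_{1},m_{2},m_{3},m_{4}$ and columns $H$ by the row permutation $(a_{1},a_{2},c_{1},c_{2})\mapsto(m_{1},m_{2},m_{3},m_{4})$, whose sign is exactly $\sign(A,C)$. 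Therefore $a_{A,C}^{H}(g) = \sign(A,C)\cdot M_{A\cup C}^{H}(y)$, and putting $\theta_{A\cup C}^{H}(g) := M_{A\cup C}^{H}(y)$ — which depends only on the set $A\cup C$ and on $H$ — checks the second condition. By Theorem~\ref{lemmaIFF} this gives $g\in\bigwedgem{2}\GL_{n}(R)$, and as $y$ was arbitrary, $\bigwedgem{2}(\GL_{n}(R))\leqslant\bigwedgem{2}\GL_{n}(R)$.

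The computation is essentially forced once one recognizes the Laplace expansion, so I expect the main obstacle to be purely the sign bookkeeping: confirming that the Laplace sign $\varepsilon(B)$ is literally $\sign(B,D)$ as defined in the preliminaries (using $B,D\subseteq H$), and that the row-reordering sign in the disjoint case is precisely $\sign(A,C)$ with no stray factor. I would also treat the sub-case $|A\cap C|=1$ explicitly when writing the argument up, since there $A\cup C$ has only three elements and the ``$4\times 4$ matrix with a repeated row'' is the cleanest way to see the vanishing; the case $A=C$ is then entirely analogous. Finally, one may remark that $\theta_{A\cup C}^{H}(g)$ is intrinsic to $g$: for any partition $A\cup C = A'\sqcup C'$ one has $M_{A\cup C}^{H}(y) = \sign(A',C')\,a_{A',C'}^{H}(g)$, so no choice of square root $y$ of $g$ is involved in the statement of the equations.
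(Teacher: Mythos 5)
Your proposal is correct and follows essentially the same route as the paper: both identify the exterior numbers of $\bigwedgem{2}(y)$ with fourth-order minors, $a_{A,C}^{H}\bigl(\bigwedgem{2}(y)\bigr)=\sign(A,C)\cdot M_{A\cup C}^{H}(y)$ (the paper's formula $(*)$), and then invoke Theorem~\ref{lemmaIFF}. The only cosmetic difference is that you package both cases at once via the Laplace expansion of the $4\times 4$ matrix $Y_{A,C,H}$ (repeated row $\Rightarrow$ zero), whereas the paper verifies the vanishing in the case $A\cap C\neq\emptyset$ by an explicit term-by-term sign cancellation.
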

\begin{proof}
We need to show that any matrix from the group $\bigwedgem{2}(\GL_{n}(R))$ satisfies the second condition of the last Theorem.

Let $x\in \GL_{n}(R)$ and let $g:=\bigwedgem{2}(x)$. Also let $A=\{a < b\}$, $B=\{c < d\}$, then
$$a_{A,C}^{H}(g)=\sign(A,C)\cdot M_{A\cup C}^H(g), \eqno(*)$$
where $M_{Q}^P$ is a minor of fourth order with rows $Q$ and columns $P$.

In the case $A\cap C\neq \emptyset$, we can assume that $a=c$. Then summands $h_{a,\sigma(i)}h_{b,\sigma(j_1)}h_{a,\sigma(j_2)}h_{d,\sigma(j_3)}$ and $h_{a,\tilde{\sigma}(i)}h_{b,\tilde{\sigma}(j_1)}h_{a,\tilde{\sigma}(j_2)}h_{d,\tilde{\sigma}(j_3)}$  are equal and have different signs, where $\sigma$ and $\tilde{\sigma}$ differ by some permutation ($\sigma(i) = \tilde{\sigma}(j_2),\sigma(j_1) = \tilde{\sigma}(j_3)$). Thus the required sum equals zero.

If $A\cap C= \emptyset$, then for any fixed set $\{a,b,c,d\}$, we have
$$\sum\limits_{\sigma\in S_4}\sign(\sigma)\cdot h_{a,\sigma(i)}h_{b,\sigma(j_1)}h_{c,\sigma(j_2)}h_{d,\sigma(j_3)} = \pm\theta_{A\cup C}^{H}.$$
The sign depends on the permutation $(a,b,c,d)$ and evidently coincides with the required.
\end{proof}

\begin{remark}
Formula $(*)$ and Theorem~\ref{Extens} show that exterior numbers for $x = \bigwedgem{2}(y) \in \bigwedgem{2}\GL_{2}(R)$, where $y \in \GL_{n}(R')$, are minors of fourth order of a matrix $y$. Therefore, the proof of the last Theorem is true in general case: elements of $\bigwedgem{2}(\GL_n(R))$ satisfy these equations. This fact is true due to ``locality'' of the statement: we use only one element, so we can find a suitable extension $R'/R$ for this element.

Since in a general case this extension cannot be chosen for all elements simultaneously, minors could be undefined globally. But exterior numbers define global sections over the whole scheme $\bigwedgem{2}\GL_{n}$. Restrictions of these sections coincide with locally defined minors.
\end{remark}

For description of overgroups it is also necessary to find equations on some congruence--subgroups. Such results are obtained as corollaries below.

Let, as above, $A\trianglelefteq R$, and let $R/A$ be the factor-ring of $R$ modulo $A$. Denote by $\rho_A: R\longrightarrow R/A$ the canonical projection sending $\lambda\in R$ to $\bar{\lambda}=\lambda+A\in R/I$. Applying the projection to all entries of a matrix, we get the reduction homomorphism 
$$\begin{array}{rcl}
\rho_{A}:\GL_{n}(R)&\longrightarrow& \GL_{n}(R/A)\\
a &\mapsto& \overline{a}=(\overline{a}_{i,j})
\end{array}$$

\begin{theorem}\label{Congr}
Let $A$ be an ideal in the ring $R$. In the notation of Theorem~$\ref{lemmaIFF}$ a matrix $g=(g_{A,C})\in \GL_{N}(R)$ belongs to the group $\rho_A^{-1}\big(\bigwedgem{2}\GL_n (R/A)\big)$ whenever the following conditions hold
\begin{center}
\begin{itemize}
\item If $A\cap C\neq \emptyset$, then $a_{A,C}^{H} \equiv 0 \pmod A$;
\item If $A\cap C = \emptyset$, then $a_{A,C}^{H} \equiv \sign(A,C)\cdot \theta_{A\cup C}^{H} \pmod A$.
\end{itemize}
\end{center}
\end{theorem}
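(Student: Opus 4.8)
The plan is to reduce Theorem~\ref{Congr} to Theorem~\ref{lemmaIFF} by a purely functorial argument, using that the exterior numbers $a_{A,C}^{H}$ are polynomial expressions in the matrix entries $g_{A,C}$ with integer coefficients. The key observation is that the formation of exterior numbers commutes with any ring homomorphism: if $\varphi\colon R\to S$ is a ring homomorphism and $g\in\GL_N(R)$, then $a_{A,C}^{H}(\varphi(g))=\varphi\big(a_{A,C}^{H}(g)\big)$, since each $a_{A,C}^{H}$ is the value at the entries of $g$ of a fixed polynomial over $\mathbb{Z}$ (namely $\sum_{B\sqcup D=H}\sign(B,D)\,z_{A,B}z_{C,D}$ in indeterminates $z_{*,\bullet}$). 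I would state this compatibility first as the single ingredient the argument rests on.

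First I would apply this to the reduction homomorphism $\rho_A\colon R\to R/A$. For $g\in\GL_N(R)$ this gives $a_{A,C}^{H}(\overline{g})=\overline{a_{A,C}^{H}(g)}$ in $R/A$, i.e. the reductions mod $A$ of the exterior numbers of $g$ are exactly the exterior numbers of $\overline{g}$. By definition, $g\in\rho_A^{-1}\big(\bigwedgem{2}\GL_n(R/A)\big)$ means precisely that $\overline{g}\in\bigwedgem{2}\GL_n(R/A)$. Now apply Theorem~\ref{lemmaIFF} to the ring $R/A$ and the matrix $\overline{g}$: this membership is equivalent to the statement that for all $H\in\bigwedgem{4}[n]$ and all $A,C\in\bigwedgem{2}[n]$ one has $a_{A,C}^{H}(\overline{g})=0$ when $A\cap C\neq\emptyset$, and $a_{A,C}^{H}(\overline{g})=\sign(A,C)\cdot\theta_{A\cup C}^{H}(\overline{g})$ when $A\cap C=\emptyset$, for a suitable function $\theta$. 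Translating the left-hand sides through the compatibility relation, these become exactly the congruences $a_{A,C}^{H}(g)\equiv 0\pmod A$ and $a_{A,C}^{H}(g)\equiv\sign(A,C)\cdot\theta_{A\cup C}^{H}(g)\pmod A$ asserted in the theorem, where the function $\theta_{A\cup C}^{H}(g)$ is read off as any lift of $\theta_{A\cup C}^{H}(\overline{g})$ (the value mod $A$ is what matters). So the two sets of conditions are literally the same, just read in $R/A$ versus read as congruences in $R$.

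The only genuine subtlety — and the step I expect to need the most care — is the bookkeeping around the auxiliary function $\theta$: in Theorem~\ref{lemmaIFF} it is not canonically normalized (it is merely asserted to exist), so I must be careful that the phrase ``$a_{A,C}^{H}\equiv\sign(A,C)\cdot\theta_{A\cup C}^{H}\pmod A$'' in the statement of Theorem~\ref{Congr} is understood with $\theta$ defined over $R/A$, or equivalently with any chosen $R$-lift. Concretely, the content being transported is the consistency condition of Theorem~\ref{Plu}\ref{condiff}: the vanishing on intersecting pairs together with the sign-coherence $\sign(A,C)\,a_{A,C}^{H}=\sign(K,L)\,a_{K,L}^{H}$ whenever $A\sqcup C=K\sqcup L$. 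That reformulation is manifestly stable under reduction mod $A$, which is what makes the equivalence clean. I would therefore phrase the proof as: (i) exterior numbers commute with $\rho_A$; (ii) hence $\overline{g}$ satisfies the Theorem~\ref{lemmaIFF} conditions over $R/A$ iff $g$ satisfies the displayed congruences; (iii) by Theorem~\ref{lemmaIFF}, the former is equivalent to $\overline{g}\in\bigwedgem{2}\GL_n(R/A)$, i.e. to $g\in\rho_A^{-1}\big(\bigwedgem{2}\GL_n(R/A)\big)$. No new computation is required beyond noting the integrality of the defining polynomials.
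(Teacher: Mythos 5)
Your proposal is correct and matches the paper's intent: the paper states Theorem~\ref{Congr} without a written proof, offering it as an immediate corollary of Theorem~\ref{lemmaIFF}, and your argument --- that the exterior numbers are integer polynomials in the entries, hence commute with the reduction $\rho_A$, so membership of $\overline{g}$ in $\bigwedgem{2}\GL_n(R/A)$ translates the conditions of Theorem~\ref{lemmaIFF} over $R/A$ into the stated congruences over $R$ --- is exactly that intended derivation. Your extra care about reading $\theta$ over $R/A$ (or via any $R$-lift) is a sensible clarification but does not change the route.
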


\vspace{0.5cm} 

Now we formulate several properties of exterior numbers.

\begin{proposition}
Let $g,h\in \bigwedgem{2}\GL_n (R)$, then 
$$a_{A,C}^H(g\cdot h)=\sign(A,C)\cdot\sum\limits_I\theta_I^H(h)\theta_{A\cup C}^I(g).$$
\end{proposition}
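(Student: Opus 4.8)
The plan is to derive the multiplicativity formula directly from the definition of exterior numbers together with the characterization of $\bigwedgem{2}\GL_n(R)$ given in Theorem~\ref{lemmaIFF}. Recall that for $g,h \in \bigwedgem{2}\GL_n(R)$ the product $gh$ has entries $(gh)_{A,B} = \sum_{P} g_{A,P}\,h_{P,B}$. First I would substitute this into the defining expression
$$a_{A,C}^H(gh) = \sum_{B\sqcup D = H}\sign(B,D)\,(gh)_{A,B}\,(gh)_{C,D} = \sum_{B\sqcup D = H}\sign(B,D)\sum_{P,Q} g_{A,P}g_{C,Q}\,h_{P,B}h_{Q,D},$$
and then reorganize the summation so that the outer index becomes the unordered pair $I = \{P,Q\}$ (equivalently $P\sqcup Q = I$ ranging over two-element subsets of $[n]$, with a contribution for $P\cap Q\neq\emptyset$ as well that must be shown to vanish).

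The key step is to recognize the inner sums as exterior numbers of $g$ and $h$. For fixed $P,Q$ with $P\cap Q = \emptyset$ the factor $\sum_{B\sqcup D = H}\sign(B,D)\,h_{P,B}h_{Q,D}$ is precisely $a_{P,Q}^H(h)$, and since $h\in\bigwedgem{2}\GL_n(R)$, Theorem~\ref{lemmaIFF} lets us rewrite it as $\sign(P,Q)\cdot\theta_{P\cup Q}^H(h) = \sign(P,Q)\cdot\theta_I^H(h)$ where $I = P\sqcup Q$. Collecting the ordered pairs $(P,Q)$ and $(Q,P)$ that give the same unordered set $I$, together with the sign bookkeeping, I expect the coefficient $g_{A,P}g_{C,Q}$ summed against $\sign(P,Q)$ over $P\sqcup Q = I$ to assemble into $a_{A,C}^I(g)$; applying Theorem~\ref{lemmaIFF} to $g$ turns this into $\sign(A,C)\cdot\theta_{A\cup C}^I(g)$. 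Terms with $P\cap Q\neq\emptyset$ contribute $a_{P,Q}^H(h) = 0$ by the same theorem, so they drop out. Putting the pieces together yields
$$a_{A,C}^H(gh) = \sum_I \sign(A,C)\cdot\theta_{A\cup C}^I(g)\cdot\theta_I^H(h),$$
which is the claimed identity.

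The main obstacle will be the sign and index bookkeeping: carefully tracking how $\sign(B,D)$ interacts with the splitting of the summation over $P,Q$ versus the unordered pair $I$, and verifying that the cross terms reassemble with exactly the sign $\sign(P,Q)$ needed to recognize $a_{A,C}^I(g)$ rather than some variant. One should also confirm that $\theta_I^H$ and $\theta_{A\cup C}^I$ are well-defined as functions on the relevant index sets (this is guaranteed by Theorem~\ref{lemmaIFF}, applied to $h$ and to $g$ respectively), so that the final expression does not depend on the chosen orderings. Once the sign normalization is pinned down — most cleanly by choosing representatives in ascending order throughout, as the paper's footnote convention suggests — the computation is a routine, if slightly tedious, rearrangement of finite sums, and no further structural input beyond Theorem~\ref{lemmaIFF} is required.
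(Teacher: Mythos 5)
Your proposal is correct and follows essentially the same route as the paper: expand the entries of $gh$, interchange the summations, identify the inner sum over $B\sqcup D=H$ as $a_{K,L}^{H}(h)=\sign(K,L)\,\theta_{K\cup L}^{H}(h)$ (vanishing when the indices intersect), regroup by $I=K\cup L$, and identify the remaining sum as $a_{A,C}^{I}(g)=\sign(A,C)\,\theta_{A\cup C}^{I}(g)$ via Theorem~\ref{lemmaIFF}. The sign and index bookkeeping you flag is exactly what the paper's computation carries out, so nothing further is needed.
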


\begin{proof}
The statement follows from the calculation below.
\begin{align*}
a_{A,C}^H(g\cdot h)&=\sum\limits_{B\sqcup D=H}\sign(B,D)\sum\limits_{K,L}g_{A,K}\,h_{K,B}\,g_{C,L}\,h_{L,D}=\sum\limits_{K,L}g_{A,K}\,g_{C,L}\sum\limits_{B\sqcup D=H}\sign(B,D)h_{K,B}\,h_{L,D}=\\
&=\sum\limits_{K,L}g_{A,K}\,g_{C,L}\sign(K,L)\theta_{K\cup L}^H(h)=\sum\limits_I\theta_I^H(h)\sum\limits_{\substack{K,L\\K\cup L=I}}\sign(K,L)g_{A,K}\,g_{C,L}=\\
&=\sum\limits_I\theta_I^H(h)\sign(A,C)\theta_{A\cup C}^I(g).
\end{align*}
\end{proof}

\begin{corollary}
Let $x_i\in \bigwedgem{2}\GL_n(R)$, then
$$a_{A,C}^H\left(\prod\limits_{h=1}^kx_h\right)=\sign(A,C)\cdot \sum\limits_{I_1,\ldots,I_{k-1}}\theta_{I_1}^H(x_k)\theta_{I_2}^{I_1}(x_{k-1})\cdot\ldots\cdot\theta_{I_{k-1}}^{I_{k-2}}(x_2)\theta_{A\cup C}^{I_{k-1}}(x_1),$$
\end{corollary}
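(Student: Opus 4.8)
The plan is to prove the corollary by induction on $k$, using the binary product formula from the preceding Proposition as the inductive step. The base case $k = 1$ is just the definition of the exterior number $a_{A,C}^{H}(x_1) = \sign(A,C)\cdot\theta_{A\cup C}^{H}(x_1)$, which holds since each $x_h$ lies in $\bigwedgem{2}\GL_n(R)$ and hence satisfies the second condition of Theorem~\ref{lemmaIFF}; in particular the function $\theta_{*}^{\bullet}$ is well defined on each factor. The case $k = 2$ is literally the Proposition. So the real content is propagating the formula from $k-1$ factors to $k$ factors.

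For the inductive step I would write $\prod_{h=1}^{k} x_h = g\cdot h$ with $g = \prod_{h=1}^{k-1} x_h$ and $h = x_k$, and apply the Proposition:
\begin{align*}
a_{A,C}^{H}\!\left(\prod_{h=1}^{k}x_h\right) &= \sign(A,C)\cdot\sum_{I_1}\theta_{I_1}^{H}(x_k)\,\theta_{A\cup C}^{I_1}\!\left(\prod_{h=1}^{k-1}x_h\right).
\end{align*}
The one subtlety is that the Proposition is phrased in terms of the $a$-numbers and the $\theta$-functions of the two factors $g$ and $h$ separately, so to continue I need the relation between $a_{A\cup C}^{I_1}(g)$ and $\theta_{A\cup C}^{I_1}(g)$ for the partial product $g$, namely $\theta_{A\cup C}^{I_1}(g) = \sign(A,C)^{-1}\cdot a_{A,C}^{I_1}(g)$ whenever $A\cap C=\emptyset$ (and the corresponding vanishing otherwise); this is again Theorem~\ref{lemmaIFF} applied to $g$, which is legitimate since $\bigwedgem{2}\GL_n(R)$ is a group and therefore closed under products. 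Substituting the inductive hypothesis for $a_{A,C}^{I_1}\big(\prod_{h=1}^{k-1}x_h\big)$ — which by induction equals $\sign(A,C)\sum_{I_2,\ldots,I_{k-1}}\theta_{I_2}^{I_1}(x_{k-1})\cdots\theta_{A\cup C}^{I_{k-1}}(x_1)$ — and noting that the two factors of $\sign(A,C)$ cancel, collapses the double sum over $I_1$ and $I_2,\ldots,I_{k-1}$ into the single sum over $I_1,\ldots,I_{k-1}$ displayed in the statement.

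The main obstacle, such as it is, is purely bookkeeping: keeping the index names and the telescoping chain $\theta_{I_1}^{H}\theta_{I_2}^{I_1}\cdots\theta_{A\cup C}^{I_{k-1}}$ straight, and being careful that the single overall $\sign(A,C)$ survives while all intermediate sign factors cancel in pairs. There is no genuine analytic or structural difficulty — everything is a consequence of the associativity of matrix multiplication repackaged through the Proposition. One could alternatively give a direct (non-inductive) proof by expanding $\prod x_h$ as a product of matrices, writing out $a_{A,C}^{H}(\prod x_h) = \sum_{B\sqcup D = H}\sign(B,D)\sum (x_k)_{\cdots}\cdots(x_1)_{\cdots}$ with chains of intermediate index sets, and then resumming each adjacent pair $\sum_{B_j\sqcup D_j}\sign(B_j,D_j)(x_j)_{K,B_j}(x_j)_{L,D_j} = \sign(K,L)\theta_{K\cup L}^{\,\cdot}(x_j)$ exactly as in the proof of the Proposition; but the induction is cleaner and I would present that.
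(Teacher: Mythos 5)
Your induction on $k$, splitting off $x_k$ and using Theorem~\ref{lemmaIFF} (applied to the partial product, legitimate since the group is closed under multiplication) to convert $a_{A,C}^{I_1}\bigl(\prod_{h=1}^{k-1}x_h\bigr)$ back into $\theta_{A\cup C}^{I_1}$, is correct and is exactly the intended route: the paper states the corollary as an immediate iteration of the preceding Proposition, whose own proof already uses the same $a\leftrightarrow\theta$ conversion. No gap; your argument matches the paper's in substance.
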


Let us give an example of an explicit calculation of the exterior numbers for an element of the exterior square of the elementary group.

\begin{proposition}
\begin{align*}
a_{A,C}^H\left(\bigwedgem{2}(t_{i,j}(\xi))\right)=
\begin{cases}
\sign(A,C),& \text{if } H=A\cup C,\\
\sign(A,C\cup j\backslash i)\cdot\left(\bigwedgem{2}(t_{i,j}(\xi))\right)_{C,C\cup j\backslash i},& \text{if } H\neq A\cup C,\; i\in C,\\
\sign(A\cup j\backslash i,C)\cdot\left(\bigwedgem{2}(t_{i,j}(\xi))\right)_{A,A\cup j\backslash i},& \text{if } H\neq A\cup C,\; i\in A,\\
0,& \text{otherwise}.
\end{cases}
\end{align*}
\end{proposition}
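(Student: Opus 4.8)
The plan is to compute $a_{A,C}^{H}\bigl(\bigwedgem{2}(t_{i,j}(\xi))\bigr)$ directly from the definition of the exterior numbers, using that the matrix $\bigwedgem{2}(t_{i,j}(\xi))$ is extremely sparse. First I would pin down its nonzero entries: by Proposition~\ref{ImageOfTransvFor2}, together with the remark that the transvections occurring there pairwise commute, $\bigwedgem{2}(t_{i,j}(\xi))$ is the identity matrix plus a sum of scalar multiples of matrix units, so its only nonzero entries are $\bigl(\bigwedgem{2}(t_{i,j}(\xi))\bigr)_{X,X}=1$ for every $X\in\bigwedgem{2}[n]$ and the entries $\bigl(\bigwedgem{2}(t_{i,j}(\xi))\bigr)_{X,\,X\cup\{j\}\setminus\{i\}}$ — an element $\pm\xi$, the sign being read off from Proposition~\ref{ImageOfTransvFor2} — for those $X$ with $i\in X$, $j\notin X$; all other entries vanish. (The same description also drops out of a one-line evaluation of the $2\times2$ minors of $e+\xi e_{i,j}$.)

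Next I would substitute this into $a_{A,C}^{H}(g)=\sum_{B\sqcup D=H}\sign(B,D)\,g_{A,B}\,g_{C,D}$. By the previous paragraph $g_{A,B}$ is nonzero only for $B=A$, or — when $i\in A$, $j\notin A$ — for $B=A\cup\{j\}\setminus\{i\}$, and similarly $g_{C,D}$ only for $D=C$, or — when $i\in C$, $j\notin C$ — for $D=C\cup\{j\}\setminus\{i\}$. Hence at most four pairs $(B,D)$ give a nonzero summand, and imposing $B\sqcup D=H$: the pair $(A,C)$ contributes only if $A\cap C=\emptyset$ and $H=A\cup C$, with value $\sign(A,C)$; the pair $(A,\,C\cup\{j\}\setminus\{i\})$, which presupposes $i\in C$, contributes only if $H=A\sqcup(C\cup\{j\}\setminus\{i\})$, that is $H=(A\cup C)\cup\{j\}\setminus\{i\}$, with value $\sign(A,\,C\cup\{j\}\setminus\{i\})\cdot\bigl(\bigwedgem{2}(t_{i,j}(\xi))\bigr)_{C,\,C\cup\{j\}\setminus\{i\}}$; the pair $(A\cup\{j\}\setminus\{i\},\,C)$, which presupposes $i\in A$, gives the same statement with $A$ and $C$ interchanged (note $\sign(C,A)=\sign(A,C)$ since $|A||C|=4$); and the pair $(A\cup\{j\}\setminus\{i\},\,C\cup\{j\}\setminus\{i\})$ never contributes, both of its sets containing $j$. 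Assembling these four outcomes gives exactly the four cases of the statement — where, in cases~2 and~3, the relation $H=(A\cup C)\cup\{j\}\setminus\{i\}$ accompanies the stated hypotheses and every other $H$ falls under the last line.

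One configuration deserves a separate word: when $i$ lies in both $A$ and $C$, the second and third pairs above can both be active. Here I would invoke formula~$(*)$ from the proof of Theorem~\ref{again}, which gives $a_{A,C}^{H}\bigl(\bigwedgem{2}(x)\bigr)=0$ as soon as $A\cap C\neq\emptyset$; thus the two contributions cancel, and in fact every configuration with $A\cap C\neq\emptyset$ belongs to ``otherwise''.

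The one genuinely nonroutine step — which I expect to be the main obstacle — is the sign bookkeeping in the second case: verifying that the coefficient $\sign(B,D)$ delivered by the definition is literally the $\sign(A,\,C\cup\{j\}\setminus\{i\})$ appearing in the statement. Using the shuffle formula $\sign(I,J)=(-1)^{|\{(x,y)\in I\times J\,:\,x>y\}|}$ for disjoint $I,J$, this reduces to a short parity computation; it works out because $|A|=2$ is even, so for each of $i,j$ (both outside $A$ in this case) the number of elements of $A$ below it has the same parity as the number above it. With this sign identity in hand, the proposition follows by putting together the four cases.
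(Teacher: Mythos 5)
Your proposal is correct and takes essentially the same route as the paper: both read off the sparse structure of $\bigwedgem{2}t_{i,j}(\xi)$ (diagonal entries $1$ plus entries at $(X,\,X\cup j\backslash i)$ for $i\in X$, $j\notin X$) and enumerate the few pairs $(B,D)$ with $B\sqcup D=H$ that can contribute to $a_{A,C}^{H}$, discarding the pair $(A\cup j\backslash i,\,C\cup j\backslash i)$ because both sets contain $j$. Your additional treatment of the configuration $i\in A\cap C$, via the vanishing of exterior numbers of $\bigwedgem{2}(x)$ when $A\cap C\neq\emptyset$ (formula $(*)$), covers a corner case that the paper's four-case analysis leaves implicit, and your sign discussion is harmless since the coefficients $\sign(A,\,C\cup j\backslash i)$ and $\sign(A\cup j\backslash i,\,C)$ are literally the $\sign(B,D)$ from the definition.
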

\begin{proof}
It is known that
$a_{A,C}^H\left(\bigwedgem{2}(t_{i,j}(\xi))\right)=\sum\limits_{B\sqcup D=H}\sign(B,D)\left(\bigwedgem{2}(t_{i,j}(\xi))\right)_{A,B}\left(\bigwedgem{2}(t_{i,j}(\xi))\right)_{C,D}.$\\
$\left(\bigwedgem{2}(t_{i,j}(\xi))\right)_{L,M}\neq 0$ whenever either $L=M$ or $L=(ik), M=(jk)$. Let us consider four cases:
\begin{enumerate}
\item $A=B$ and $C=B$, 
\item $A\neq B$ and $C\neq B$ (while $A$ and $C$ contain $i$),
\item $A=B$ and $C\neq D$ (while  $C$ contains $i$),
\item $A\neq B$ and $C=D$ (while $A$ contains $i$).
\end{enumerate}
Let $A=B$ и $C=D$, then $\left(\bigwedgem{2}(t_{i,j}(\xi))\right)_{A,A}=\left(\bigwedgem{2}(t_{i,j}(\xi))\right)_{B,B}=1$.

Now, let $A\neq B$ and $C\neq B$, $i\in A$ and $i\in B$. We can assume that $A=\{i,a\}, C=\{i,c\}$. Then a coefficient $a_{A,C}^H$ has the following form
$$\sum\limits_{B\sqcup D=H}\sign(B,D)\left(\bigwedgem{2}(t_{i,j}(\xi))\right)_{\{i,a\},B}\left(\bigwedgem{2}(t_{i,j}(\xi))\right)_{\{i,c\},D}.$$
Since $B$ and $D$ contain $j$ (otherwise the corresponding transvection element is zero), we have $B=\{j,a\}, D=\{j,c\}$. This is impossible, since intersection of $B$ and $D$ have to be empty. Thus, the second case is excluded. It remains to consider two similar cases. Let $A=B$ and $C\neq D$.\\
The sum can be rewrite in the following form:
\begin{multline*}
\sum\limits_{B\sqcup D=H}\sign(B,D)\left(\bigwedgem{2}(t_{i,j}(\xi))\right)_{A,B}\left(\bigwedgem{2}(t_{i,j}(\xi))\right)_{C,D}=\\
=\sum\limits_{D\subset H\backslash A}\sign(A,D)\left(\bigwedgem{2}(t_{i,j}(\xi))\right)_{A,A}\left(\bigwedgem{2}(t_{i,j}(\xi))\right)_{C,D}=\sum\limits_{D\subset H\backslash A}\sign(A,D)\left(\bigwedgem{2}(t_{i,j}(\xi))\right)_{C,D}.
\end{multline*}
The set $C$ have to contain $i$, hence $C=\{i,c\}$. That $\left(\bigwedgem{2}(t_{i,j}(\xi))\right)_{\{i,c\},D}$ equals not zero (??) it is necessary that $D=\{j,c\}$.  Then the sum equals $\sign(A,C\cup j\backslash i)\cdot(\bigwedgem{2}(t_{i,j}(\xi)))_{C,C\cup j\backslash i}$.\\
The last case is true by the same argument.
\end{proof}

\subsection{Second series of equations}

Alternatively, we will present one more system of equations defining the affine group scheme $\bigwedgem{2}\GL_n$. Note that any Pl\"ucker polynomial $f_{\mathcal{I}} \in \Plu$ is a quadratic form. Then  it can be rewritten as follows 
$$f_{\mathcal{I}}(x)=x^tB_{\mathcal{I}}x,$$
where $B_{\mathcal{I}}$ is a matrix such that its elements equal
$$\left(B_{\mathcal{I}}\right)_{M,L}=\begin{cases}\sign(M,L), &\text{ if } M\sqcup L=\mathcal{I},\\0, &\text{ otherwise.}\end{cases}$$
Then the condition
$$f_{\mathcal{I}}(gx)=g\circ f_{\mathcal{I}}(x)=\sum\limits_{\mathcal{J}}\alpha_{\mathscr{I}}^{\mathcal{I}}f_{\mathscr{I}},\text{ where }\alpha_{\mathscr{I}}^{\mathcal{I}}\in R$$
has the following form
$$x^tg^tB_{\mathcal{I}}gx=\sum\limits_{\mathscr{I}}\alpha_{\mathscr{I}}^\mathcal{I}x^tB_{\mathscr{I}}x \text{ или }g^tB_{\mathcal{I}}=\sum\limits_{\mathscr{I}}\alpha_{\mathscr{I}}^\mathcal{I}B_{\mathscr{I}}g^{-1}.$$
Consequently, we have the useful result.
\begin{theorem} \label{SecondForm}
A matrix $g \in \GL_{N}(R)$ belongs to $\bigwedgem{2}\GL_n(R)$ whenever the identities for some constants $\alpha_{*}^{\bullet}$ hold.\\

$\begin{cases}
\sum\limits_{\substack{N:\\N\cap K=\emptyset}}\alpha_{k\cup N}^{\mathcal{I}}\sign(K,N))g_{N,L}'=0, &\text {for any pairs $K,L$ such that $L\not\subset \mathcal{I}$},\\
\sum\limits_{\substack{N:\\N\cap K=\emptyset}}\alpha_{k\cup N}^{\mathcal{I}}\sign(K,N))g_{N,L}'=g_{\mathcal{I}\backslash L,K}\sign(\mathcal{I}\backslash L,K), &\text {for any pairs $K,L$ such that $L\subset \mathcal{I}.$}
\end{cases}$
\end{theorem}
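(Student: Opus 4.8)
The plan is to recognize $\bigwedgem{2}\GL_n(R)$ as the (scheme-theoretic) stabilizer of the Pl\"ucker ideal and then to unwind the stabilization condition for the canonical generators $f_{\mathcal I}$ through the symmetric matrices $B_{\mathcal I}$ introduced just above the statement; the resulting entrywise identities will be exactly the displayed system. The argument in fact yields the full equivalence, of which the theorem records the sufficiency.

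First I would recall the equivalence already invoked after Theorem~\ref{lemmaIFF}: by the maximal-subgroup classification~\cite[Table 1]{SeitzMaxSub}, the group $\bigwedgem{2}\GL_n$ coincides with the stabilizer of $\Plu$, so $g\in\bigwedgem{2}\GL_n(R)$ iff $g\circ\Plu=\Plu$. Since $f\mapsto g\circ f$ is a graded ring automorphism of $R[p_I:I\in\bigwedgem{2}[n]]$ and, by Theorem~\ref{Plu}, $\Plu$ is generated as an ideal by its degree-two part $\Plu_{2}=\langle f_{\mathcal I}:\mathcal I\in\bigwedgem{4}[n]\rangle$, stabilization is equivalent to $g\circ f_{\mathcal I}\in\Plu_{2}$ for every $\mathcal I$ in the canonical basis; over $\mathbb C$ (which, as explained in Section~\ref{eq}, is enough) the reverse inclusion $\Plu_{2}\subseteq g\circ\Plu_{2}$ is then automatic by a dimension count, and $g\circ\Plu=\Plu$ follows. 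Hence $g\in\bigwedgem{2}\GL_n$ iff there exist constants $\alpha^{\mathcal I}_{\mathscr I}$ with $g\circ f_{\mathcal I}=\sum_{\mathscr I}\alpha^{\mathcal I}_{\mathscr I}f_{\mathscr I}$ for all $\mathcal I$.

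Next I would pass to matrices. The matrix $B_{\mathcal I}$ is symmetric, because interchanging two disjoint unordered pairs is an even permutation, so $\sign(M,L)=\sign(L,M)$; hence $g^{t}B_{\mathcal I}g$ is symmetric as well, and over $\mathbb C$ a quadratic form determines its symmetric Gram matrix. Therefore the polynomial identity $f_{\mathcal I}(gx)=x^{t}(g^{t}B_{\mathcal I}g)x=\sum_{\mathscr I}\alpha^{\mathcal I}_{\mathscr I}\,x^{t}B_{\mathscr I}x$ holds for all $x$ iff $g^{t}B_{\mathcal I}g=\sum_{\mathscr I}\alpha^{\mathcal I}_{\mathscr I}B_{\mathscr I}$, and, $g$ being invertible, iff $g^{t}B_{\mathcal I}=\sum_{\mathscr I}\alpha^{\mathcal I}_{\mathscr I}B_{\mathscr I}g^{-1}$ — the identity displayed before the statement.

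Finally I would compare the $(K,L)$-entries of this last identity. On the left, $(g^{t}B_{\mathcal I})_{K,L}=\sum_{M}g_{M,K}(B_{\mathcal I})_{M,L}$, and by the defining formula for $B_{\mathcal I}$ only $M=\mathcal I\setminus L$ contributes, which is possible exactly when $L\subset\mathcal I$, giving $\sign(\mathcal I\setminus L,L)\,g_{\mathcal I\setminus L,K}$ if $L\subset\mathcal I$ and $0$ otherwise. On the right, $(B_{\mathscr I}g^{-1})_{K,L}=\sum_{N}(B_{\mathscr I})_{K,N}(g^{-1})_{N,L}$ forces $\mathscr I=K\sqcup N$, so summing over $\mathscr I$ produces $\sum_{N:\,N\cap K=\emptyset}\alpha^{\mathcal I}_{K\cup N}\sign(K,N)(g^{-1})_{N,L}$. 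Equating the two expressions in the cases $L\not\subset\mathcal I$ and $L\subset\mathcal I$ reproduces the two lines of the stated system (with $g'=g^{-1}$). The only genuinely non-elementary ingredient is the input from~\cite{SeitzMaxSub} that the $\Plu$-stabilizer is no larger than $\bigwedgem{2}\GL_n$; everything else is sign bookkeeping together with the harmless passage between quadratic forms and symmetric matrices over $\mathbb C$, and keeping the signs (the orientations $\sign(M,L)$, $\sign(K,N)$, $\sign(\mathcal I\setminus L,L)$) straight is the step demanding the most care.
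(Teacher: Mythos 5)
Your argument is correct and takes essentially the same route as the paper: the paper's ``proof'' is precisely the derivation displayed just before the statement (writing $f_{\mathcal I}(x)=x^{t}B_{\mathcal I}x$, passing to $g^{t}B_{\mathcal I}=\sum_{\mathscr I}\alpha^{\mathcal I}_{\mathscr I}B_{\mathscr I}g^{-1}$, and reading off the $(K,L)$-entries), with the identification of $\bigwedgem{2}\GL_n$ with the stabilizer of $\Plu$ taken from the discussion after Theorem~\ref{lemmaIFF}; you merely make explicit the details the paper leaves implicit (symmetry of $B_{\mathcal I}$, generation of $\Plu$ in degree two, the dimension count over $\mathbb C$, and $g'=g^{-1}$). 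One remark: your entrywise computation gives the sign $\sign(\mathcal I\setminus L,\,L)$ on the right-hand side, while the printed statement has $\sign(\mathcal I\setminus L,\,K)$ --- the former is what the matrix identity actually yields, so this discrepancy is a typo in the paper's statement (of a piece with ``$\alpha_{k\cup N}$'' and the stray parenthesis), not a flaw in your argument.
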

Note that these equations have more compact form, but its contain a set of undefined constants.

\section{Geometric interpretation of exterior numbers}\label{geom}

In the last Section we present an algorithm for computation of the exterior numbers of any matrix $g\in \bigwedgem{2}\SL_n(R)$ via weight diagrams. We refer the reader to the paper~\cite{atlas}, where the theory of weight diagrams was developed. Also, that paper contains an extensive bibliography.

In Fig.~\ref{Fig1} the weight diagram $(A_3,\varpi_2)$ in the standard basis $\{e_1,e_2,e_3,e_4\}$ is presented.

\centerline{\xymatrix @-1.2pc{
&&{\overset{e_1\wedge e_4}{\bullet}}\ar@{-}[dr]^1 \ar@{-}[dl]_3\\
&{\overset{e_1\wedge e_3}{\bullet}}\ar@{-}[dl]_2 \ar@{-}[dr]_1 &&{\overset{e_2\wedge e_4}{\bullet}}\ar@{-}[dl]^3 \ar@{-}[dr]^2\\
{\overset{e_1\wedge e_2}{\bullet}}&&{\overset{e_2\wedge e_3}{\bullet}}&&{\overset{e_3\wedge e_4}{\bullet}}}}
\captionof{figure}{$(A_3,\varpi_2)$}\label{Fig1}

Recall the definition of the exterior numbers. Let $H:=\{i\}\sqcup J$ be four different numbers from $\{1,\ldots,n\}$. For any matrix $g\in \SL_{N}(R)$ and any pairs of numbers $A,C\in\bigwedgem{2}[n]$:
$$a_{A,C}^H(g)=\sum\limits_{h=1}^3(-1)^h(g_{A,ij_h}g_{C,J\backslash j_h}+g_{C,ij_h}g_{A,J\backslash j_h})=\sum\limits_{B\sqcup D=H}\sign(B,D)g_{A,B}g_{C,D}.$$

Now, we define one more diagram equals the Descartes square of the initial. With its help we can visualize matrix entries of any element of the group $\bigwedgem{2}\SL_n(R)$. To picture this diagram we should construct a ``big'' diagram $(A_{n-1},\varpi_{2})$ and also construct $N$ ``small'' copies of a diagram $(A_{n-1},\varpi_2)$ in all vertices of the ``big'' one.

\centerline{\xymatrix @-1.7pc{
&&&&&&&&&&{\bullet} \ar@{-}[dl] \ar@{-}[dr]\\
&&&&&&&&&{\bullet} \ar@{-}[dl] \ar@{-}[dr] &{\bullet}\ar@{..}[dldldldldldldldl] \ar@{..}[drdrdrdrdrdrdrdr]&{\bullet} \ar@{-}[dl]  \ar@{-}[dr]\\
&&&&&&&&{\bullet}&&{\bullet}&&{\bullet}\\
&&&&&&&&&&&&&&&&&&&&\\
&&&&&&{\bullet} \ar@{-}[dl] \ar@{-}[dr] &&&&&&&&{\bullet} \ar@{-}[dl] \ar@{-}[dr] \\
&&&&&{\bullet} \ar@{-}[dl] \ar@{-}[dr] &{\scriptstyle{13}}\ar@{..}[drdrdrdr]&{\bullet} \ar@{-}[dl] \ar@{-}[dr] &&&&&&{\bullet} \ar@{-}[dl] \ar@{-}[dr] &{\bullet}\ar@{..}[dldldldl]&{\bullet} \ar@{-}[dl] \ar@{-}[dr]\\
&&&&{\bullet}&&{\bullet}&&{\scriptstyle{34}}&&&&{\bullet}&&{\bullet}&&{\bullet}\\
&&&&&&&&&&&&&&&&&&&&\\
&&{\bullet} \ar@{-}[dl] \ar@{-}[dr]&&&&&&&&{\bullet} \ar@{-}[dl] \ar@{-}[dr]&&&&&&&&{\bullet} \ar@{-}[dl] \ar@{-}[dr]\\
&{\bullet} \ar@{-}[dl] \ar@{-}[dr]&{\bullet}&{\bullet} \ar@{-}[dl] \ar@{-}[dr]&&&&&&{\bullet} \ar@{-}[dl] \ar@{-}[dr]&{\bullet}&{\bullet} \ar@{-}[dl] \ar@{-}[dr]&&&&&&{\bullet}\ar@{-}[dl] \ar@{-}[dr]&{\bullet}&{\bullet} \ar@{-}[dl] \ar@{-}[dr]\\
{\bullet}&&{\bullet}&&{\bullet}&&&&{\bullet}&&{\bullet}&&{\bullet}&&&&{\bullet}&&{\bullet}&&{\bullet}}}
\captionof{figure}{$(A_{3},\varpi_2)\times(A_{3},\varpi_2)$}\label{Fig2}

\begin{remark}
In Fig.~\ref{Fig2} a point with coordinates (13, 34) corresponds to an element $g_{13,34}\in \bigwedgem{2}\SL_4(R)$.
\end{remark}

We introduce the notion of a path on a weight diagram of the symmetric square of $\SL_n(R)$ \footnote{i.\,e., a diagram of the exterior square with additional vertices $(i,i)$ for $1\leq i\leq n+1$.}. For a diagram vertex $(ij)\in\bigwedgem{2}[n]$ a path for number $i$ (respectively $j$) is the maximum set of vertices containing $i$ (respectively $j$) and connecting its edges. On the weight diagram for the group $\Sym^2\SL_7(R)$ we draw two paths incident to the vertex $(15)$ ($\sim$ for number $1$, $=$ for number $5$).

\centerline{\xymatrix @-1.7pc {
&&&&&&{\bullet}\ar@{~}[dldldldldl]\ar@{-}[dr]\\
&&&&&{\bullet}\ar@{-}[dr]&&{\bullet}\ar@{-}[dl]\ar@{-}[dr]\\
&&&&{15}\ar@{=}[drdrdrdr]&&{\bullet}\ar@{-}[dl]\ar@{-}[dr]&&{\bullet}\ar@{-}[dl]\ar@{-}[dr]\\
&&&{\bullet}\ar@{-}[dr]&&{\bullet}\ar@{-}[dl]&&{\bullet}\ar@{-}[dl]\ar@{-}[dr]&&{\bullet}\ar@{-}[dl]\ar@{-}[dr]\\
&&{\bullet}\ar@{-}[dr]&&{\bullet}\ar@{-}[dl]\ar@{-}[dr]&&{\bullet}\ar@{-}[dl]&&{\bullet}\ar@{-}[dl]\ar@{-}[dr]&&{\bullet}\ar@{=}[dldl]\ar@{-}[dr]\\
&{\bullet}\ar@{-}[dl]\ar@{-}[dr]&&{\bullet}\ar@{-}[dl]\ar@{-}[dr]&&{\bullet}\ar@{-}[dl]\ar@{-}[dr]&&{\bullet}\ar@{-}[dl]&&{\bullet}\ar@{-}[dr]&&{\bullet}\ar@{-}[dl]\ar@{-}[dr]\\
{\circ}&&{\circ}&&{\circ}&&{\circ}&&{\circ}&&{\circ}&&{\circ}}}

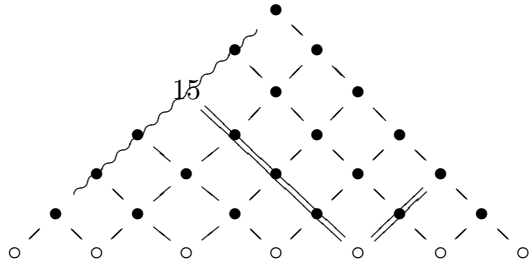
\captionof{figure}{Paths incident to the vertex (15)}\label{Figpath}

A root embedding of a diagram $(A_{3},\varpi_{2})$ in a diagram $(A_{n-1},\varpi_{2})$ is called an elementary square. Notice that any elementary square can be obtained as a result of pairwise intersections of four paths, which are organized into pairs of parallel paths. 

Let us give an example of constructing an elementary square of the bivector representation of the group $\SL_7(R)$ for $H=\{1,2,4,6\}$.
We split the set $H$ as follows: $H=\{14\}\cup \{26\}$. Next we construct four paths incident to these vertices. Points of the intersection of these paths form an elementary square. 

\begin{figure}[h]
\begin{center}
\includegraphics[scale=0.9]{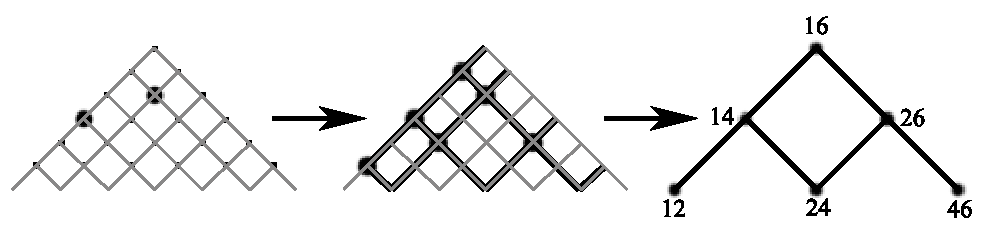}
\caption{Elementary square for the set $\{1,2,4,6\}$}
\label{Figelsq}
\end{center}
\end{figure}

To calculate the exterior number $a_{A,C}^H(g)$ for the fixed set $(A, C, H)$ it is necessary to fix two vertices $A$ and $C$ in the diagram $(A_{n-1}, \varpi_{2})$. Next, we need to consider two copies of a diagram $(A_{n-1}, \varpi_{2})$ corresponding to the fixed vertices $A$ and $C$. Using the set $H$ we construct four paths in these diagrams. The intersection points form elementary squares. Then 
$a_{A,C}^H(g)$ is the  signed sum of all possible pairwise products of vertices of the last two elementary squares. In this sum, the choice of signs for the terms is shown in Fig~\ref{Figsign}.

\centerline{\xymatrix @-1.0pc{
&&{\overset{\ -\ }{\bullet}}\ar@{-}[dr]\ar@{.}@/^/[dd] \ar@{-}[dl]\\
&{\overset{\ +\ }{\bullet}}\ar@{-}[dl]\ar@{.}@/_/[rr] \ar@{-}[dr] &&{\overset{\ +\ }{\bullet}}\ar@{-}[dl] \ar@{-}[dr]\\
{\underset{\ -\ }{\bullet}}\ar@{.}@/_2pc/[rrrr]&&{\underset{\ -\ }{\bullet}}&&{\underset{\text{ }-\text{ }}{\bullet}}}}

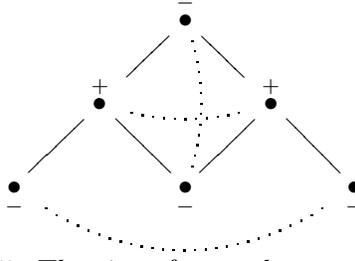
\captionof{figure}{The signs for an elementary square}\label{Figsign}

Let us give an example of calculation of a coefficient $a_{23,24}^{1234}(g)$ for the group $\bigwedgem{2}\SL_4(R)$. Below we spotlight two copies of the diagram $(A_{3}, \varpi_{2})$ corresponding to the indices $A=23$ and $C=24$. Then, 
$$a_{23,24}^{1234}(g)=g_{23,12}\cdot g_{24,34}-g_{23,13}\cdot g_{24,24}+g_{23,14}\cdot g_{24,23}+g_{23,23}\cdot g_{24,14}-g_{23,24}\cdot g_{24,13}+g_{23,34}\cdot g_{24,12}.$$

\begin{figure}[h]
\begin{center}
\includegraphics[scale=0.8]{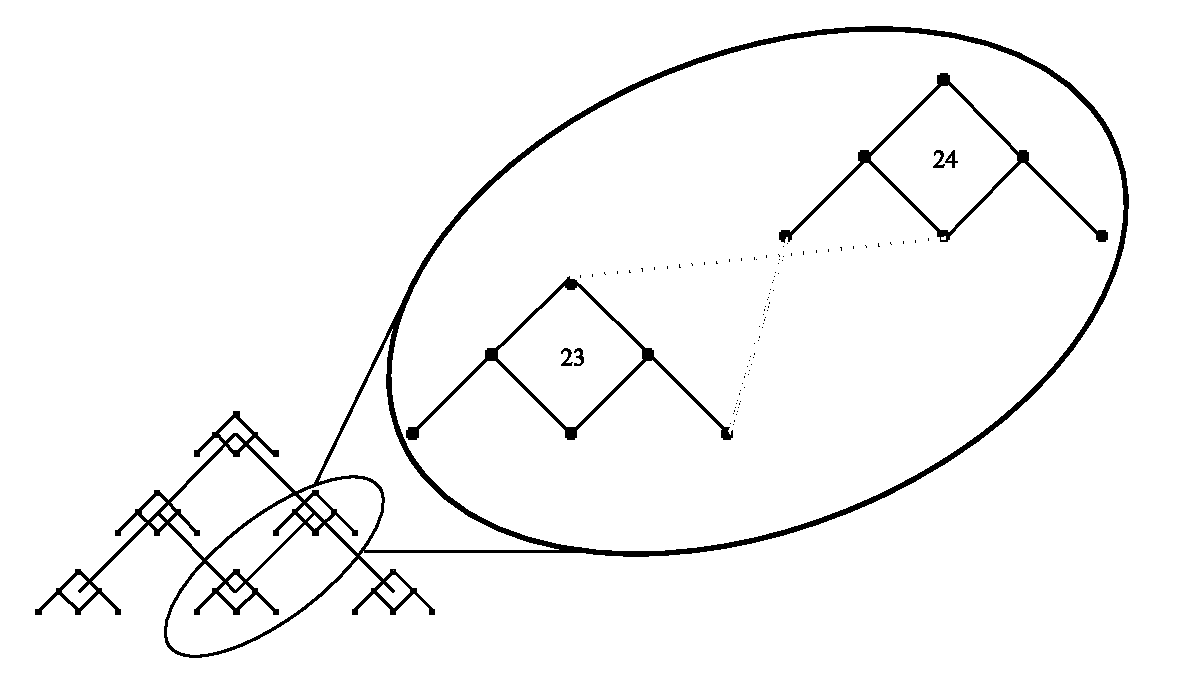}
\caption{$(A_{3}, \varpi_{2})$ subdiagrams for $(A,C) = (23, 24)$}
\label{Fig3}
\end{center}
\end{figure}

\begin{theorem}\label{Tutor}
The following algorithm computes the exterior numbers of any matrix $g\in \bigwedgem{2}\SL_n(R)$.
\end{theorem}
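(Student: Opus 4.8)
The plan is to verify that the recipe described in the paragraphs immediately preceding the statement is exactly the formula
$$a_{A,C}^H(g)=\sum_{B\sqcup D=H}\sign(B,D)\,g_{A,B}\,g_{C,D}$$
rewritten in the language of the Descartes-square diagram. Since Theorem~\ref{Tutor} asserts correctness of an \emph{algorithm}, the proof is really a careful unravelling of three combinatorial identifications, and I would organize it as such. First I would show that, for a fixed $H=\{i\}\sqcup J\in\bigwedgem{4}[n]$ and a fixed subdiagram (the small copy of $(A_{n-1},\varpi_2)$ attached to the vertex $A$), the vertices of the elementary square cut out by the four $H$-paths are precisely the six pairs $B\in\bigwedgem{2}[n]$ with $B\subseteq H$. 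This is the content of the remark that an elementary square is a root embedding of $(A_3,\varpi_2)$: the four paths through the two chosen vertices $B_0$, $H\setminus B_0$ of the small $(A_3,\varpi_2)$-block intersect in the $\binom{4}{2}=6$ weights of that block, which under the standard basis labelling are exactly the two-element subsets of $H$. Here I would lean on Fig.~\ref{Fig1} and on the path definition given just above the statement.

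Second I would match the signs. The display in Fig.~\ref{Figsign} attaches $+$ to the two "middle" weights of the $(A_3,\varpi_2)$-block and $-$ to the top and the three bottom weights — but read correctly this is the assignment $B\mapsto\sign(B,H\setminus B)$ up to a global sign, and the product of the two copies (one in the $A$-block, one in the $C$-block) is symmetric, so the global sign cancels in the product $g_{A,B}g_{C,D}$ with $D=H\setminus B$. Concretely, for $H=\{1,2,3,4\}$ one checks $\sign(12,34)=\sign(34,12)=+$, $\sign(13,24)=\sign(24,13)=-$, $\sign(14,23)=\sign(23,14)=+$, and comparing with the worked example $a_{23,24}^{1234}(g)=g_{23,12}g_{24,34}-g_{23,13}g_{24,24}+g_{23,14}g_{24,23}+g_{23,23}g_{24,14}-g_{23,24}g_{24,13}+g_{23,34}g_{24,12}$ confirms that the diagram signs reproduce $\sign(B,D)$ exactly (the apparent asymmetry in Fig.~\ref{Figsign}, one $+$ up high versus $-$'s along the bottom, is an artefact of which of the six weights one draws where, and disappears once both blocks are paired). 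Third I would note that "signed sum of all pairwise products of vertices of the two elementary squares" is literally $\sum_{B\subseteq H}(\pm)\,g_{A,B}\,g_{C,H\setminus B}$, which by the sign computation equals $\sum_{B\sqcup D=H}\sign(B,D)g_{A,B}g_{C,D}=a_{A,C}^H(g)$, i.e.\ the very definition recalled at the top of Section~\ref{geom}. Thus the algorithm outputs the exterior number, and by Theorem~\ref{lemmaIFF} these numbers decide membership in $\bigwedgem{2}\SL_n(R)$, so the algorithm is what is claimed.

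I expect the main obstacle to be purely expository rather than mathematical: pinning down, once and for all, the bijection between the geometry ("which small-diagram vertex sits at which intersection of which two $H$-paths") and the combinatorics ("which two-element subset $B\subseteq H$"), and then checking that the sign rule drawn in Fig.~\ref{Figsign} is invariant under the relabelling freedom in that bijection. In other words, the delicate point is not that $\sum_{B\sqcup D=H}\sign(B,D)g_{A,B}g_{C,D}$ equals the exterior number — that is the definition — but that the \emph{picture} faithfully encodes this sum with consistent signs independent of drawing choices; I would settle this by verifying the $n=4$ case in full (where $H$ is forced to be $\{1,2,3,4\}$ and there is nothing to choose) and then observing that for general $n$ any admissible $H$ spans a root subsystem of type $A_3$, reducing to the $n=4$ computation via the root embedding, so no new signs appear. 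That reduction is exactly the assertion that every elementary square is a root embedding of $(A_3,\varpi_2)$, which I would treat as already established by the discussion preceding the theorem.
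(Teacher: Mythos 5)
Your proposal is correct and takes essentially the same route as the paper: the paper gives no separate proof of Theorem~\ref{Tutor} beyond the preceding discussion, since the algorithm is just a diagrammatic transcription of the defining identity $a_{A,C}^H(g)=\sum_{B\sqcup D=H}\sign(B,D)\,g_{A,B}\,g_{C,D}$, and your unravelling (elementary-square vertices $\leftrightarrow$ two-element subsets of $H$, sign check in the $n=4$ case, reduction to it via the root embedding of $(A_3,\varpi_2)$) is exactly the intended argument. If anything, your explicit comparison of the signs of Fig.~\ref{Figsign} with the worked example $a_{23,24}^{1234}$ is more careful about the sign conventions than the paper itself.
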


\textbf{4C--Tutorial for computation of $a_{A,C}^H(g)$ for the group $\bigwedgem{2}\SL_n(R)$.}
\begin{itemize}
\item \textbf{C}onstruct a diagram $(A_{n-1},\varpi_{2})\times (A_{n-1},\varpi_{2})$;
\item \textbf{C}hoose two copies of a diagram $(A_{n-1},\varpi_{2})$ corresponding to the numbers $A$ and $C$;
\item \textbf{C}onstruct two elementary squares corresponding to the set $H$ in diagrams from the previous item;
\item \textbf{C}alculate $a_{A,C}^H(g)$ as a signed sum of all possible pairwise products of vertices of the last two elementary squares.
\end{itemize}

\bibliographystyle{zapiski.bst}
\bibliography{english}

\end{document}